\newtheorem{theorem}{Theorem}[section]
\newtheorem{corollary}[theorem]{Corollary}
\newtheorem{definition}[theorem]{Definition}
\newtheorem{proposition}[theorem]{Proposition}
\newtheorem{remark}[theorem]{Remark}
\numberwithin{theorem}{section}
\begin{document}
\title[Radon-Nikodym type theorem for $\alpha $-completely positive maps on
groups ]{A Radon-Nikodym type theorem for $\alpha $-completely positive maps
on groups }
\author{Maria Joi\c{t}a}
\address{Department of Mathematics, Faculty of Mathematics and Computer
Science, University of Bucharest, Str. Academiei nr. 14, Bucharest, Romania}
\email{ mjoita@fmi.unibuc.ro}
\urladdr{http://sites.google.com/a/g.unibuc.ro/maria-joita/}
\subjclass[2010]{Primary 43A35, 46C50,46L05}
\keywords{$\alpha $-completely positive map on groups, Radon-Nikodym
theorem, unitary representation on Krein spaces}

\begin{abstract}
We show that an operator valued $\alpha $-completely positive map on a group 
$G$ is given by a unitary representation of $G$ on a Krein space which
satisfies some condition. Moreover, two unitary equivalent such unitary
representations define the same $\alpha $-completely positive map. Also we
introduce a pre-order relation on the collection of $\alpha $-completely
positive maps on a group and we characterize this relation in terms of the
unitary representation associated to each map.
\end{abstract}

\maketitle

\section{Introduction}

The study of completely positive maps is motivated by their applications in
the theory of quantum measurements, operational approach to quantum
mechanics, quantum information theory, where operator-valued completely
positive maps on $C^{\ast }$-algebras are used as a mathematical model for
quantum operations, and quantum probability \cite{R, P, JS}. On the other
hand, the notion of locality in the Wightman formulation of gauge quantum
field theory conflicts with the notion of positivity. To avoid this,
Jakobczyk and Strocchi \cite{JS} introduced the concept of $\alpha $%
-positivity. Motivated by the notions of $\alpha $-positivity and
P-functional \cite{Ho, AO}, recently, Heo, Hong and Ji \cite{HHJ} introduced
the notion of $\alpha $-completely positive map between $C^{\ast }$%
-algebras, and they provided a Kasparov-Stinespring-Gelfand-Naimark-Segal
type construction for $\alpha $-completely positive maps.

In \cite{H}, Heo introduced the notion of $\alpha $-completely positive map
from a group $G$ to a $C^{\ast }$-algebra $A$. By analogy with the KSGNS\
construction for $\alpha $-completely positive maps on $C^{\ast }$-algebras 
\cite{HHJ}, he associated to an $\alpha $-completely positive map $\varphi $
from a group $G$ to the $C^{\ast }$-algebra $L(X)\ $of all adjointable
operators on a Hilbert $C^{\ast }$-module $X$ a quadruple $\left( \pi
_{\varphi },X_{\varphi },J_{\varphi },V_{\varphi }\right) $ consisting of a
Krein Hilbert $C^{\ast }$-module $\left( X_{\varphi },J_{\varphi }\right) $,
a $J_{\varphi }$-unitary representation of $G$ on $X_{\varphi }$ and a
bounded linear operator $V_{\varphi }$ such that $\left[ \pi _{\varphi
}\left( G\right) V_{\varphi }X\right] =X_{\varphi },V_{\varphi }^{\ast }\pi
_{\varphi }\left( g\right) ^{\ast }\pi _{\varphi }\left( g^{\prime }\right)
V_{\varphi }=V_{\varphi }^{\ast }\pi _{\varphi }\left( \alpha \left(
g^{-1}\right) g^{\prime }\right) V_{\varphi }$ for all $g,g^{\prime }\in G$
and $\varphi \left( g\right) =V_{\varphi }^{\ast }\pi _{\varphi }\left(
g\right) V_{\varphi }$ for all $g\in G$. But, in general, a such of
quadruple does not define an $\alpha $-completely positive map (Remark 2.6).
In this paper, we consider $\alpha $-completely positive maps from a group $%
G $ to $L(\mathcal{H})$, the $C^{\ast }$-algebra of all bounded linear
operators on a Hilbert space $\mathcal{H}$, and we show that under some
conditions, a quadruple $\left( \pi ,\mathcal{H},\mathcal{J},V\right) $
consisting of a Krein Hilbert space $\left( \mathcal{H},\mathcal{J}\right) $%
, a $\mathcal{J}$-unitary representation of $G$ on $\mathcal{H}$ and a
bounded linear operator $V$ defines an $\alpha $-completely positive map
andwe associate to each $\alpha $-completely positive map a such of
quadruple, that is unique up to unitary equivalence. In Section 3, we prove
a Radon-Nikodym theorem type for $\alpha $-completely positive maps on
groups.

\section{Stinespring type theorem for $\protect\alpha $-completely positive
maps }

Let $G$ be a (topological) group with an involution $\alpha $ (that is, a$\ $%
(continuous) map $\alpha :G\rightarrow G$ such that $\alpha ^{2}=$id$%
_{G},\alpha \left( e\right) =e$ and $\alpha \left( g^{-1}\right) =\alpha
\left( g\right) ^{-1}$ for all $g\in G$) and $\mathcal{H}$ a Hilbert space.

\begin{definition}
\cite[Definition 2.1]{H} A map $\varphi :G\rightarrow L(\mathcal{H})$ is $%
\alpha $-completely positive if

\begin{enumerate}
\item $\varphi \left( \alpha \left( g_{1}\right) \alpha \left( g_{2}\right)
\right) =\varphi \left( \alpha \left( g_{1}g_{2}\right) \right) =\varphi
\left( g_{1}g_{2}\right) $ for all $g_{1},g_{2}\in G;$

\item for all $g_{1},...,g_{n}\in G$, the matrix $\left[ \varphi \left(
\alpha \left( g_{i}\right) ^{-1}g_{j}\right) \right] _{i,j=1}^{n}$ is
positive in $L(\mathcal{H})$;

\item there is $K>0$, such that 
\begin{equation*}
\left[ \varphi \left( g_{i}\right) ^{\ast }\varphi \left( g_{j}\right) %
\right] _{i,j=1}^{n}\leq K\left[ \varphi \left( \alpha \left( g_{i}\right)
^{-1}g_{j}\right) \right] _{i,j=1}^{n}
\end{equation*}%
for all $g_{1},...,g_{n}\in G;$

\item for all $g\in G$, there is $M(g)>0$ such that 
\begin{equation*}
\left[ \varphi \left( \alpha \left( gg_{i}\right) ^{-1}gg_{j}\right) \right]
_{i,j=1}^{n}\leq M(g)\left[ \varphi \left( \alpha \left( g_{i}\right)
^{-1}g_{j}\right) \right] _{i,j=1}^{n}
\end{equation*}%
for all $g_{1},...,g_{n}\in G$.
\end{enumerate}
\end{definition}

\begin{remark}
Let $\varphi :G\rightarrow L(\mathcal{H})$ be an $\alpha $-completely
positive map.

\begin{enumerate}
\item $\varphi \left( \alpha \left( g\right) \right) =\varphi \left(
g\right) $ for all $g\in G;$

\item $\varphi \left( \alpha \left( g^{-1}\right) \right) =\varphi \left(
g\right) ^{\ast }$ for all $g\in G;$

\item $\varphi \left( g^{-1}\right) =\varphi \left( g\right) ^{\ast }$ for
all $g\in G.$
\end{enumerate}
\end{remark}

Let $\mathcal{H}$ be a Hilbert space and $\mathcal{J}$ a bounded linear
operator on $\mathcal{H}$ such that $\mathcal{J=J}^{\ast }\mathcal{=J}^{-1}.$%
Then we can define an indefinite inner product by $[x,y]=\left\langle 
\mathcal{J}x,y\right\rangle $. The pair $\left( \mathcal{H},\mathcal{J}%
\right) $ is called a Krein space. A representation of $G$ on the Krein
space $\left( \mathcal{H},\mathcal{J}\right) $ is a morphism $\pi
:G\rightarrow L(\mathcal{H})$. A $\mathcal{J}$-unitary representation of $G$
on the Krein space $\left( \mathcal{H},\mathcal{J}\right) $ is a
representation $\pi $ such that $\pi \left( g^{-1}\right) =\mathcal{J}\pi
\left( g\right) ^{\ast }\mathcal{J}$ for all $g\in G$ and $\pi \left(
e\right) =$id$_{\mathcal{H}}$. If $\pi $ is a representation of $G$ on $%
\mathcal{H}$, $\left[ \pi \left( G\right) \mathcal{H}\right] $ denotes the
closed linear subspace of $\mathcal{H}$ generated by $\{\pi \left( g\right)
\xi ;g\in G,\xi \in \mathcal{H}\}.$

\begin{theorem}
\cite[Theorem 2.2]{H} Let $\varphi :G\rightarrow L(\mathcal{H})$ be an $%
\alpha $-completely positive map. Then there are a Krein space $\left( 
\mathcal{H}_{\varphi },\mathcal{J}_{\varphi }\right) $, a $\mathcal{J}%
_{\varphi }$-unitary representation $\pi _{\varphi }$ of $G$ on $\left( 
\mathcal{H}_{\varphi },\mathcal{J}_{\varphi }\right) $ and a bounded linear
operator $V_{\varphi }:\mathcal{H}\rightarrow \mathcal{H}_{\varphi }$ such
that

\begin{enumerate}
\item $\varphi \left( g\right) =V_{\varphi }^{\ast }\pi _{\varphi }\left(
g\right) V_{\varphi }$ for all $g\in G;$

\item $\left[ \pi _{\varphi }\left( G\right) V_{\varphi }\mathcal{H}\right] =%
\mathcal{H}_{\varphi }$ $;$

\item $V_{\varphi }^{\ast }\pi _{\varphi }\left( g\right) ^{\ast }\pi
_{\varphi }\left( g^{\prime }\right) V_{\varphi }=V_{\varphi }^{\ast }\pi
_{\varphi }\left( \alpha \left( g^{-1}\right) g^{\prime }\right) V_{\varphi
} $ for all $g,g^{\prime }\in G$.
\end{enumerate}
\end{theorem}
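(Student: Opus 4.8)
The plan is to carry out a GNS/KSGNS--type construction, the genuinely new feature being that the symmetry $\mathcal{J}_{\varphi}$ of the Krein space is manufactured from the involution $\alpha$ together with condition (1) of Definition 2.1. First I would form the algebraic tensor product $\mathbb{C}[G]\otimes \mathcal{H}$, whose elements are finite sums $\sum_{i}\delta _{g_{i}}\otimes \xi _{i}$ (with $\delta _{g}$ the point mass at $g$), and equip it with the sesquilinear form determined on generators by
\begin{equation*}
\langle \delta _{g}\otimes \xi ,\delta _{h}\otimes \eta \rangle _{0}=\langle \varphi (\alpha (h)^{-1}g)\xi ,\eta \rangle .
\end{equation*}
Condition (2), namely the positivity of the matrices $[\varphi (\alpha (g_{i})^{-1}g_{j})]$, is exactly what makes $\langle \cdot ,\cdot \rangle _{0}$ positive semidefinite.

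Next I would pass to the separated completion: setting $N=\{x:\langle x,x\rangle _{0}=0\}$ (a subspace, by Cauchy--Schwarz), writing $[x]$ for the class of $x$, and letting $\mathcal{H}_{\varphi }$ be the Hilbert-space completion of $(\mathbb{C}[G]\otimes \mathcal{H})/N$. Define $V_{\varphi }:\mathcal{H}\rightarrow \mathcal{H}_{\varphi }$ by $V_{\varphi }\xi =[\delta _{e}\otimes \xi ]$; since $\Vert V_{\varphi }\xi \Vert ^{2}=\langle \varphi (e)\xi ,\xi \rangle \leq \Vert \varphi (e)\Vert \,\Vert \xi \Vert ^{2}$ it is bounded. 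Define $\pi _{\varphi }(g)$ on generators by $\pi _{\varphi }(g)[\delta _{h}\otimes \eta ]=[\delta _{gh}\otimes \eta ]$. The key estimate is that, by condition (4),
\begin{equation*}
\Big\Vert \pi _{\varphi }(g)\sum_{i}\delta _{h_{i}}\otimes \eta _{i}\Big\Vert ^{2}=\sum_{i,j}\langle \varphi (\alpha (gh_{j})^{-1}gh_{i})\eta _{i},\eta _{j}\rangle \leq M(g)\Big\Vert \sum_{i}\delta _{h_{i}}\otimes \eta _{i}\Big\Vert ^{2},
\end{equation*}
which simultaneously shows that $\pi _{\varphi }(g)$ preserves $N$ (so it descends to the quotient) and that it is bounded. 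It is immediate that $\pi _{\varphi }$ is a homomorphism with $\pi _{\varphi }(e)=\mathrm{id}$, so each $\pi _{\varphi }(g)$ is invertible in $L(\mathcal{H}_{\varphi })$ with inverse $\pi _{\varphi }(g^{-1})$.

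The heart of the argument --- and the step I expect to be the main obstacle --- is the construction of the symmetry. I would define $\mathcal{J}_{\varphi }$ on generators by $\mathcal{J}_{\varphi }[\delta _{h}\otimes \eta ]=[\delta _{\alpha (h)}\otimes \eta ]$. Writing out $\Vert \mathcal{J}_{\varphi }\sum_{i}\delta _{h_{i}}\otimes \eta _{i}\Vert ^{2}$ produces the entries $\varphi (\alpha (\alpha (h_{j}))^{-1}\alpha (h_{i}))=\varphi (h_{j}^{-1}\alpha (h_{i}))$, and here condition (1) applied with $g_{1}=\alpha (h_{j})^{-1}$, $g_{2}=h_{i}$ gives $\varphi (h_{j}^{-1}\alpha (h_{i}))=\varphi (\alpha (h_{j})^{-1}h_{i})$; hence $\mathcal{J}_{\varphi }$ is isometric, in particular well defined on the quotient. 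Since $\alpha $ is a bijection with $\alpha ^{2}=\mathrm{id}_{G}$, the operator $\mathcal{J}_{\varphi }$ is a surjective isometry with $\mathcal{J}_{\varphi }^{2}=\mathrm{id}$, i.e. a self-adjoint unitary, so $(\mathcal{H}_{\varphi },\mathcal{J}_{\varphi })$ is a Krein space. The $\mathcal{J}_{\varphi }$-unitarity $\pi _{\varphi }(g^{-1})=\mathcal{J}_{\varphi }\pi _{\varphi }(g)^{\ast }\mathcal{J}_{\varphi }$ I would check on generators: moving $\mathcal{J}_{\varphi }$ across and reducing both sides to inner products, the required identity becomes $\varphi (\alpha (\alpha (k)^{-1}g^{-1})\alpha (h))=\varphi (\alpha (k)^{-1}g^{-1}h)$, which is again precisely condition (1) (now with $g_{1}=\alpha (k)^{-1}g^{-1}$, $g_{2}=h$).

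Finally I would read off the three asserted properties. Property (2) holds by construction, since $[\delta _{g}\otimes \xi ]=\pi _{\varphi }(g)V_{\varphi }\xi $ and these classes span a dense subspace. For property (1), a direct computation from the form gives $V_{\varphi }^{\ast }[\delta _{g}\otimes \xi ]=\varphi (g)\xi $, whence $V_{\varphi }^{\ast }\pi _{\varphi }(g)V_{\varphi }=\varphi (g)$. For property (3), one computes $V_{\varphi }^{\ast }\pi _{\varphi }(g)^{\ast }\pi _{\varphi }(g^{\prime })V_{\varphi }=\varphi (\alpha (g)^{-1}g^{\prime })$ directly, while property (1) gives $V_{\varphi }^{\ast }\pi _{\varphi }(\alpha (g^{-1})g^{\prime })V_{\varphi }=\varphi (\alpha (g^{-1})g^{\prime })$; these agree because $\alpha (g^{-1})=\alpha (g)^{-1}$. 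Thus conditions (1), (2) and (4) are what the construction consumes --- (2) for positivity, (4) for boundedness of $\pi _{\varphi }$, and (1), used twice, for the Krein symmetry and its intertwining of $\pi _{\varphi }(g)$ with $\pi _{\varphi }(g^{-1})$ --- whereas the uniform bound in condition (3) is not needed here; it is the extra hypothesis relevant to the reverse direction, in which one tries to recover an $\alpha $-completely positive map from such a quadruple.
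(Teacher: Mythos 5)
Your construction is correct and is essentially the paper's own (and Heo's): your $\mathbb{C}[G]\otimes\mathcal{H}$ with the form $\langle \delta_g\otimes\xi,\delta_h\otimes\eta\rangle_0=\langle\varphi(\alpha(h)^{-1}g)\xi,\eta\rangle$ coincides, via Remark 2.2 and condition (1), with the paper's $\mathcal{F}(G,\mathcal{H})$ equipped with $\langle f_1,f_2\rangle=\sum_{g,g'}\langle f_1(g),\varphi(\alpha(g^{-1})g')f_2(g')\rangle$, and your $\pi_\varphi$, $\mathcal{J}_\varphi$, $V_\varphi$ are exactly the paper's left translation, $f\mapsto f\circ\alpha$, and $\xi\mapsto\xi\delta_e$. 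You supply more detail than the paper's sketch (boundedness of $\pi_\varphi(g)$ from condition (4), isometry of $\mathcal{J}_\varphi$ from condition (1), and the observation that condition (3) of Definition 2.1 is not consumed), all of which checks out.
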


The quadruple $\left( \pi _{\varphi },\mathcal{H}_{\varphi },\mathcal{J}%
_{\varphi },V_{\varphi }\right) $ is called the minimal Naimark -KSGNS
dilation of $\varphi $ \cite{H}.

\begin{remark}
If $\left( \pi _{\varphi },\mathcal{H}_{\varphi },\mathcal{J}_{\varphi
},V_{\varphi }\right) $ is the minimal Naimark -KSGNS dilation of $\varphi $
in the sense of Heo, then $\left( \pi _{\varphi },\mathcal{H}_{\varphi },%
\mathcal{J}_{\varphi },W\right) $, where $W=\mathcal{J}_{\varphi }$ $%
V_{\varphi }$, is a minimal Naimark -KSGNS dilation of $\varphi $ too.
Indeed, we have:

\begin{enumerate}
\item $\varphi \left( g\right) =$ $\varphi \left( g^{-1}\right) ^{\ast
}=\left( V_{\varphi }^{\ast }\pi _{\varphi }\left( g^{-1}\right) V_{\varphi
}\right) ^{\ast }=\left( V_{\varphi }^{\ast }\mathcal{J}_{\varphi }\pi
_{\varphi }\left( g\right) ^{\ast }\mathcal{J}_{\varphi }V_{\varphi }\right)
^{\ast }$

$=\left( W^{\ast }\pi _{\varphi }\left( g\right) ^{\ast }W\right) ^{\ast
}=W^{\ast }\pi _{\varphi }\left( g\right) W\ $for all $g\in G.$

\item Since $V_{\varphi }^{\ast }\pi _{\varphi }\left( g\right) ^{\ast }\pi
_{\varphi }\left( g^{\prime }\right) V_{\varphi }=V_{\varphi }^{\ast }\pi
_{\varphi }\left( \alpha \left( g^{-1}\right) g^{\prime }\right) V_{\varphi
} $ for all $g,g^{\prime }\in G,$ and $\left[ \pi _{\varphi }\left( G\right)
V_{\varphi }\mathcal{H}\right] =\mathcal{H}_{\varphi }$, we have 
\begin{equation*}
V_{\varphi }^{\ast }\pi _{\varphi }\left( g\right) ^{\ast }=V_{\varphi
}^{\ast }\pi _{\varphi }\left( \alpha \left( g^{-1}\right) \right)
\end{equation*}%
for all $g\in G$, and then 
\begin{equation*}
\pi _{\varphi }\left( g\right) V_{\varphi }=\mathcal{J}_{\varphi }\pi
_{\varphi }\left( \alpha \left( g\right) \right) \mathcal{J}_{\varphi
}V_{\varphi }
\end{equation*}%
for all $g\in G.$ Then $\left[ \pi _{\varphi }\left( G\right) W\mathcal{H}%
\right] =\mathcal{J}_{\varphi }\left[ \mathcal{J}_{\varphi }\pi _{\varphi
}\left( \alpha \left( G\right) \right) V_{\varphi }\mathcal{H}\right] =%
\mathcal{J}_{\varphi }\left[ \pi _{\varphi }\left( G\right) V_{\varphi }%
\mathcal{H}\right] =\mathcal{J}_{\varphi }\mathcal{H}_{\varphi }=\mathcal{H}%
_{\varphi }.$

\item Let $g,g^{\prime }\in G$. Then 
\begin{eqnarray*}
W^{\ast }\pi _{\varphi }\left( g\right) ^{\ast }\pi _{\varphi }\left(
g^{\prime }\right) W &=&V_{\varphi }^{\ast }\mathcal{J}_{\varphi }\pi
_{\varphi }\left( g\right) ^{\ast }\pi _{\varphi }\left( g^{\prime }\right) 
\mathcal{J}_{\varphi }V_{\varphi }=V_{\varphi }^{\ast }\pi _{\varphi }\left(
g^{-1}\right) \mathcal{J}_{\varphi }\pi _{\varphi }\left( g^{\prime }\right) 
\mathcal{J}_{\varphi }V_{\varphi } \\
&=&V_{\varphi }^{\ast }\pi _{\varphi }\left( g^{-1}\right) \pi _{\varphi
}\left( \alpha \left( g^{\prime }\right) \right) V_{\varphi }\ \  \\
&=&V_{\varphi }^{\ast }\pi _{\varphi }\left( g^{-1}\alpha \left( g^{\prime
}\right) \right) V_{\varphi }=\varphi \left( g^{-1}\alpha \left( g^{\prime
}\right) \right) =\varphi \left( \alpha \left( g^{-1}\right) g^{\prime
}\right) \\
&=&\varphi \left( g^{\prime -1}\alpha \left( g\right) \right) ^{\ast
}=\left( V_{\varphi }^{\ast }\pi _{\varphi }\left( g^{\prime -1}\alpha
\left( g\right) \right) V_{\varphi }\right) ^{\ast } \\
&=&\left( V_{\varphi }^{\ast }\mathcal{J}_{\varphi }\pi _{\varphi }\left(
\alpha \left( g^{-1}\right) g^{\prime }\right) ^{\ast }\mathcal{J}_{\varphi
}V_{\varphi }\right) ^{\ast } \\
&=&\left( W^{\ast }\pi _{\varphi }\left( \alpha \left( g^{-1}\right)
g^{\prime }\right) ^{\ast }W\right) ^{\ast }=W^{\ast }\pi _{\varphi }\left(
\alpha \left( g^{-1}\right) g^{\prime }\right) W.
\end{eqnarray*}
\end{enumerate}
\end{remark}

\begin{remark}
We remark that $\mathcal{J}_{\varphi }\pi _{\varphi }\left( g\right)
V_{\varphi }=\pi _{\varphi }\left( \alpha \left( g\right) \right) V_{\varphi
}$ for all $g\in G$, if and only if, $V_{\varphi }^{\ast }\pi _{\varphi
}\left( g\right) ^{\ast }\pi _{\varphi }\left( g^{\prime }\right) V_{\varphi
}=V_{\varphi }^{\ast }\pi _{\varphi }\left( \alpha \left( g^{-1}\right)
g^{\prime }\right) V_{\varphi }$ for all $g,g^{\prime }\in G$ and $\mathcal{J%
}_{\varphi }V_{\varphi }=V_{\varphi }.$

Indeed, if $\mathcal{J}_{\varphi }\pi _{\varphi }\left( g\right) V_{\varphi
}=\pi _{\varphi }\left( \alpha \left( g\right) \right) V_{\varphi }$ for all 
$g\in G$, then $\mathcal{J}_{\varphi }V_{\varphi }=V_{\varphi }\ $and 
\begin{eqnarray*}
V_{\varphi }^{\ast }\pi _{\varphi }\left( g\right) ^{\ast }\pi _{\varphi
}\left( g^{\prime }\right) V_{\varphi } &=&V_{\varphi }^{\ast }\mathcal{J}%
_{\varphi }\pi _{\varphi }\left( g^{-1}\right) \mathcal{J}_{\varphi }\pi
_{\varphi }\left( g^{\prime }\right) V_{\varphi }\ \text{(Remark 2.4 (2))} \\
&=&V_{\varphi }^{\ast }\pi _{\varphi }\left( g^{-1}\right) \pi _{\varphi
}\left( \alpha \left( g^{\prime }\right) \right) V_{\varphi }=V_{\varphi
}^{\ast }\pi _{\varphi }\left( g^{-1}\alpha \left( g^{\prime }\right)
\right) V_{\varphi } \\
&=&\varphi \left( g^{-1}\alpha \left( g^{\prime }\right) \right) =\varphi
\left( \alpha \left( g^{-1}\right) g^{\prime }\right) \\
&=&V_{\varphi }^{\ast }\pi _{\varphi }\left( \alpha \left( g^{-1}\right)
g^{\prime }\right) V_{\varphi }
\end{eqnarray*}%
for all $g,g^{\prime }\in G.$

Conversely, if $V_{\varphi }^{\ast }\pi _{\varphi }\left( g\right) ^{\ast
}\pi _{\varphi }\left( g^{\prime }\right) V_{\varphi }=V_{\varphi }^{\ast
}\pi _{\varphi }\left( \alpha \left( g^{-1}\right) g^{\prime }\right)
V_{\varphi }$ for all $g,g^{\prime }\in G$, then%
\begin{equation*}
\pi _{\varphi }\left( g\right) V_{\varphi }=\mathcal{J}_{\varphi }\pi
_{\varphi }\left( \alpha \left( g\right) \right) \mathcal{J}_{\varphi
}V_{\varphi }
\end{equation*}%
for all $g\in G$, and taking into account that $\mathcal{J}_{\varphi
}V_{\varphi }=V_{\varphi }$, we have 
\begin{equation*}
\mathcal{J}_{\varphi }\pi _{\varphi }\left( g\right) V_{\varphi }=\pi
_{\varphi }\left( \alpha \left( g\right) \right) \mathcal{J}_{\varphi
}V_{\varphi }=\pi _{\varphi }\left( \alpha \left( g\right) \right)
V_{\varphi }
\end{equation*}%
for all $g\in G$.
\end{remark}

\begin{remark}
If $G$ is a group with an involution $\alpha ,$ $\pi $ is a $\mathcal{J}$%
-unitary representation of $G$ on $\left( \mathcal{K},\mathcal{J}\right) $
and $V$ a bounded linear operator from a Hilbert space $\mathcal{H\ }$to $%
\mathcal{K}$ such that $\left[ \pi \left( G\right) V\mathcal{H}\right] =%
\mathcal{K}$ and $V^{\ast }\pi \left( g\right) ^{\ast }\pi \left( g^{\prime
}\right) V=V^{\ast }\pi \left( \alpha \left( g^{-1}\right) g^{\prime
}\right) V$ for all $g,g^{\prime }\in G$, then the map $\varphi
:G\rightarrow L(\mathcal{H})$ defined by $\varphi \left( g\right) =V^{\ast
}\pi \left( g\right) V$ is not in general an $\alpha $-completely positive
map.

\textit{Example.} Let $\mathbb{Z}$ be the additive group of integers numbers
and $\alpha \left( n\right) =-n$ an involution of $\mathbb{Z}$. The map $%
\mathcal{J}:\mathbb{C}^{2}\rightarrow \mathbb{C}^{2}$ defined by $\mathcal{J}%
\left( x,y\right) =\left( y,x\right) $ is a bounded linear operator such
that $\mathcal{J}=\mathcal{J}^{\ast }=\mathcal{J}^{-1}$, the map $\pi :%
\mathbb{Z\rightarrow }L\mathbb{(C}^{2}\mathbb{)}$ defined $\pi \left(
n\right) \left( x,y\right) =\left( e^{n}x,e^{-n}y\right) $ is a $\mathcal{J}$%
-unitary representation of $\mathbb{Z}$ on $\left( \mathbb{C}^{2},\mathcal{J}%
\right) ,$ and the map $V:$ $\mathbb{C}^{2}\rightarrow \mathbb{C}^{2}$
defined by $V\left( x,y\right) =\left( x-y,y\right) $ is a bounded linear
operator. It is easy to verify that $\left[ \pi \left( \mathbb{Z}\right) V%
\mathbb{C}^{2}\right] =\mathbb{C}^{2}$ and $V^{\ast }\pi \left( n\right)
^{\ast }\pi \left( m\right) V=V^{\ast }\pi \left( n+m\right) V=V^{\ast }\pi
\left( \alpha \left( -n\right) m\right) V$ for all $n,m\in \mathbb{Z},$ but $%
\varphi :\mathbb{Z\rightarrow }L\mathbb{(C}^{2}\mathbb{)}$ defined by $%
\varphi \left( n\right) =V^{\ast }\pi \left( n\right) V$ is not $\alpha $%
-completely positive, because $\varphi \left( n\right) \neq \varphi \left(
-n\right) =\varphi \left( \alpha \left( n\right) \right) .$
\end{remark}

\begin{proposition}
Let $G$ be a group with an involution $\alpha ,$ $\pi $ a $\mathcal{J}$%
-unitary representation of $G$ on $\left( \mathcal{K},\mathcal{J}\right) $
and $V$ a bounded linear operator from a Hilbert space $\mathcal{H}$ such
that $\left[ \pi \left( G\right) V\mathcal{H}\right] =\mathcal{K}$ and $%
\mathcal{J}\pi \left( g\right) V=\pi \left( \alpha \left( g\right) \right) V$
for all $g\in G$. Then the map $\varphi :G\rightarrow L(\mathcal{H})$
defined by $\varphi \left( g\right) =V^{\ast }\pi \left( g\right) V$ is an $%
\alpha $-completely positive map.
\end{proposition}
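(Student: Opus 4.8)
The plan is to verify the four defining conditions of Definition 2.1 directly for $\varphi(g) = V^{\ast }\pi(g)V$, after first extracting a few consequences of the standing hypothesis $\mathcal{J}\pi(g)V = \pi(\alpha(g))V$. First I would record the auxiliary identities. Taking $g = e$ gives $\mathcal{J}V = V$, hence $V^{\ast }\mathcal{J} = V^{\ast }$. By Remark 2.5 the hypothesis is equivalent to $\mathcal{J}V = V$ together with $V^{\ast }\pi(g)^{\ast }\pi(g^{\prime })V = V^{\ast }\pi(\alpha(g^{-1})g^{\prime })V$ for all $g,g^{\prime }$; and exactly as in Remark 2.4(2), the density $[\pi(G)V\mathcal{H}] = \mathcal{K}$ upgrades the latter to the pointwise relation $V^{\ast }\pi(g)^{\ast } = V^{\ast }\pi(\alpha(g^{-1}))$. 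Combining $V^{\ast }\mathcal{J} = V^{\ast }$ with the $\mathcal{J}$-unitarity of $\pi$ also yields $\varphi(\alpha(g)) = V^{\ast }\pi(\alpha(g))V = V^{\ast }\mathcal{J}\pi(g)V = \varphi(g)$.

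For condition (1) the non-multiplicativity of $\alpha$ makes $\varphi(\alpha(g_1)\alpha(g_2))$ the genuinely delicate quantity. I would expand $\varphi(\alpha(g_1)\alpha(g_2)) = V^{\ast }\pi(\alpha(g_1))\pi(\alpha(g_2))V$, rewrite the right factor via $\pi(\alpha(g_2))V = \mathcal{J}\pi(g_2)V$, and rewrite the left factor via $V^{\ast }\pi(\alpha(g_1)) = V^{\ast }\pi(g_1^{-1})^{\ast }$ (the pointwise relation applied to $g_1^{-1}$) together with $\pi(g_1^{-1})^{\ast } = \mathcal{J}\pi(g_1)\mathcal{J}$. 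Using $\mathcal{J}^2 = \mathrm{id}$ and $V^{\ast }\mathcal{J} = V^{\ast }$, everything collapses to $V^{\ast }\pi(g_1)\pi(g_2)V = \varphi(g_1g_2)$; combined with $\varphi(\alpha(g_1g_2)) = \varphi(g_1g_2)$ this gives (1). This is the step I expect to be the main obstacle, since it is the only one that simultaneously uses the $\mathcal{J}$-unitarity, the density-derived pointwise relation, and $\mathcal{J}V = V$.

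The remaining three conditions I would handle uniformly through a Gram-matrix computation. Writing $T_i = \pi(g_i)V$, the relation $V^{\ast }\pi(g)^{\ast }\pi(g^{\prime })V = V^{\ast }\pi(\alpha(g^{-1})g^{\prime })V$ identifies $\varphi(\alpha(g_i)^{-1}g_j) = V^{\ast }\pi(g_i)^{\ast }\pi(g_j)V = T_i^{\ast }T_j$, so for $\xi = (\xi_1,\dots,\xi_n) \in \mathcal{H}^n$ and any $S \in L(\mathcal{K})$ one has $\sum_{i,j}\langle T_i^{\ast }ST_j\xi_j,\xi_i\rangle = \langle S\eta,\eta\rangle$ with $\eta = \sum_j T_j\xi_j$. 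Taking $S = \mathrm{id}$ gives positivity of $[\varphi(\alpha(g_i)^{-1}g_j)]_{i,j}$, which is condition (2). For (3) I would note $\varphi(g_i)^{\ast }\varphi(g_j) = T_i^{\ast }(VV^{\ast })T_j$ and use $0 \le VV^{\ast } \le \|V\|^2\,\mathrm{id}$ to get $\langle T_i^{\ast }VV^{\ast }T_j\xi,\xi\rangle = \|V^{\ast }\eta\|^2 \le \|V\|^2\|\eta\|^2$, so $K = \|V\|^2$ works. For (4) I would use the pointwise-relation identity $\varphi(\alpha(gg_i)^{-1}gg_j) = V^{\ast }\pi(gg_i)^{\ast }\pi(gg_j)V = T_i^{\ast }(\pi(g)^{\ast }\pi(g))T_j$ and the same estimate with $S = \pi(g)^{\ast }\pi(g)$, so that $M(g) = \|\pi(g)\|^2$ works (positive since $\pi(g)$ is invertible). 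Assembling (1)--(4) shows that $\varphi$ is $\alpha$-completely positive.
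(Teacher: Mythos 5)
Your computations for conditions (1)--(4) are correct, and in substance they coincide with the paper's own argument: the paper disposes of this proposition with ``It is similar to the proof of Proposition 3.1'', and that proof is exactly your Gram-vector computation (your $T_{i}=\pi \left( g_{i}\right) V$, $K=\left\Vert V\right\Vert ^{2}$ and $M(g)=\left\Vert \pi \left( g\right) \right\Vert ^{2}$ are the paper's choices specialized to $T=$id$_{\mathcal{K}}$), together with the same operator manipulations for condition (1).

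There is, however, one genuine logical flaw in your opening step. You invoke Remark 2.5 to pass from the hypothesis $\mathcal{J}\pi \left( g\right) V=\pi \left( \alpha \left( g\right) \right) V$ to the quadratic relation $V^{\ast }\pi \left( g\right) ^{\ast }\pi \left( g^{\prime }\right) V=V^{\ast }\pi \left( \alpha \left( g^{-1}\right) g^{\prime }\right) V$. But Remark 2.5 is stated, and proved, for the minimal Naimark--KSGNS dilation of a map $\varphi $ that is \emph{already known} to be $\alpha $-completely positive: the paper's proof of precisely the direction you cite runs through the equality $\varphi \left( g^{-1}\alpha \left( g^{\prime }\right) \right) =\varphi \left( \alpha \left( g^{-1}\right) g^{\prime }\right) $, which is an instance of condition (1) of Definition 2.1 for $\varphi $ --- the very property you are trying to establish. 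As written, your first step is therefore circular. The repair is short and stays inside your framework: the pointwise relation follows directly from the hypothesis, with no density argument and no appeal to Remark 2.5. Indeed, $\mathcal{J}$-unitarity gives $\pi \left( h\right) ^{\ast }=\mathcal{J}\pi \left( h^{-1}\right) \mathcal{J}$, and $\alpha \left( g^{-1}\right) ^{-1}=\alpha \left( g\right) $, so
\begin{equation*}
\pi \left( \alpha \left( g^{-1}\right) \right) ^{\ast }V=\mathcal{J}\pi \left( \alpha \left( g\right) \right) \mathcal{J}V=\mathcal{J}\pi \left( \alpha \left( g\right) \right) V=\mathcal{J}^{2}\pi \left( g\right) V=\pi \left( g\right) V,
\end{equation*}
where the second equality uses $\mathcal{J}V=V$ (your $g=e$ observation) and the third uses the hypothesis; taking adjoints yields $V^{\ast }\pi \left( g\right) ^{\ast }=V^{\ast }\pi \left( \alpha \left( g^{-1}\right) \right) $, and right multiplication by $\pi \left( g^{\prime }\right) V$ yields the quadratic relation. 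With this substitution the remainder of your proof goes through verbatim.
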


\begin{proof}
It is similar to the proof of Proposition 3.1.
\end{proof}

\begin{theorem}
Let $\varphi :G\rightarrow L(\mathcal{H})$ be an $\alpha $-completely
positive map.

\begin{enumerate}
\item There are a Krein space $\left( \mathcal{H}_{\varphi },\mathcal{J}%
_{\varphi }\right) $, a $\mathcal{J}_{\varphi }$-unitary representation $\pi
_{\varphi }$ of $G$ on $\left( \mathcal{H}_{\varphi },\mathcal{J}_{\varphi
}\right) $ and a bounded linear operator $V_{\varphi }:\mathcal{H}%
\rightarrow \mathcal{H}_{\varphi }$ such that

\begin{enumerate}
\item $\varphi \left( g\right) =V_{\varphi }^{\ast }\pi _{\varphi }\left(
g\right) V_{\varphi }$ for all $g\in G;$

\item $\left[ \pi _{\varphi }\left( G\right) V_{\varphi }\mathcal{H}\right] =%
\mathcal{H}_{\varphi }$ $;$

\item $\mathcal{J}_{\varphi }\pi _{\varphi }\left( g\right) V_{\varphi }=\pi
_{\varphi }\left( \alpha \left( g\right) \right) V_{\varphi }$ for all $g\in
G$.
\end{enumerate}

\item If $\pi $ is a $\mathcal{J}$-unitary representation of $G$ on a Krein
space $\left( \mathcal{K},\mathcal{J}\right) \ $and $V:$ $\mathcal{H}%
\rightarrow \mathcal{K}$ is a bounded linear operator such that

\begin{enumerate}
\item $\varphi \left( g\right) =V^{\ast }\pi \left( g\right) V$ for all $%
g\in G;$

\item $\left[ \pi \left( G\right) V\mathcal{H}\right] =\mathcal{K}$ $;$

\item $\mathcal{J}\pi \left( g\right) V=\pi \left( \alpha \left( g\right)
\right) V$ for all $g\in G$.

then there is a unitary operator $U:\mathcal{H}_{\varphi }\rightarrow 
\mathcal{K}$ such that

\begin{enumerate}
\item $U\mathcal{J}_{\varphi }=\mathcal{J}U;$

\item $UV_{\varphi }=V;$

\item $U\pi _{\varphi }(g)=\pi \left( g\right) U$ for all $g\in G.$
\end{enumerate}
\end{enumerate}
\end{enumerate}
\end{theorem}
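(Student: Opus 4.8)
I would prove part (1) by a direct Gelfand--Naimark--Segal type construction adapted to the indefinite inner product, and part (2) by the standard intertwiner argument, using condition (c) at the key moments to control the Krein involutions. For part (1), I would begin with the algebraic vector space $\mathcal{H}_0$ of finitely supported $\mathcal{H}$-valued functions on $G$, written as finite sums $\sum_i\delta_{g_i}\otimes\xi_i$, and equip it with the sesquilinear form
\[
\Big\langle \sum_i\delta_{g_i}\otimes\xi_i,\ \sum_j\delta_{g_j}\otimes\eta_j\Big\rangle_\varphi=\sum_{i,j}\langle\varphi(\alpha(g_j)^{-1}g_i)\xi_i,\eta_j\rangle .
\]
Condition (2) of Definition~2.1 is exactly what makes this form positive semidefinite, so after quotienting by its null space and completing I obtain a Hilbert space $\mathcal{H}_\varphi$. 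I would then define $\pi_\varphi(g)$ by left translation $\delta_h\otimes\xi\mapsto\delta_{gh}\otimes\xi$, the operator $V_\varphi$ by $V_\varphi\xi=[\delta_e\otimes\xi]$, and the candidate involution $\mathcal{J}_\varphi$ by $\delta_g\otimes\xi\mapsto\delta_{\alpha(g)}\otimes\xi$.

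The verifications then proceed one axiom at a time. Condition (4) yields boundedness of $\pi_\varphi(g)$ (in fact $\|\pi_\varphi(g)\|\le M(g)^{1/2}$) and shows it descends to the quotient. For $\mathcal{J}_\varphi$, condition (1) together with $\alpha^2=\mathrm{id}$ gives the identity $\varphi(g_j^{-1}\alpha(g_i))=\varphi(\alpha(g_j)^{-1}g_i)$, from which one checks that $\mathcal{J}_\varphi$ preserves $\langle\cdot,\cdot\rangle_\varphi$ and squares to the identity; being an isometric involution it is a self-adjoint unitary, so $(\mathcal{H}_\varphi,\mathcal{J}_\varphi)$ is a Krein space. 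A one-line computation gives $V_\varphi^*\pi_\varphi(g)V_\varphi=\varphi(g)$, which is (a); property (b) holds by construction; and $\mathcal{J}_\varphi\pi_\varphi(g)V_\varphi=\pi_\varphi(\alpha(g))V_\varphi$ is (c). Finally $\pi_\varphi$ is $\mathcal{J}_\varphi$-unitary because each $\pi_\varphi(g)$ preserves the indefinite form $[\xi,\eta]=\langle\mathcal{J}_\varphi\xi,\eta\rangle_\varphi=\sum_{i,j}\langle\varphi(g_j^{-1}g_i)\xi_i,\eta_j\rangle$, and this reduces to the cancellation $(gg_j)^{-1}(gg_i)=g_j^{-1}g_i$.

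For part (2), I would define $U$ on the dense subspace $[\pi_\varphi(G)V_\varphi\mathcal{H}]$ by $U\big(\sum_i\pi_\varphi(g_i)V_\varphi\xi_i\big)=\sum_i\pi(g_i)V\xi_i$. The crucial point for well-definedness and isometry is that the Hilbert-space inner product of the spanning vectors depends only on $\varphi$. First, setting $g=e$ in condition (c) gives $\mathcal{J}_\varphi V_\varphi=V_\varphi$ and $\mathcal{J}V=V$, hence $V^*\mathcal{J}=V^*$. Then, writing $\mathcal{J}$-unitarity as $\pi(g)^*=\mathcal{J}\pi(g^{-1})\mathcal{J}$ and using (c) and condition (1),
\[
V^*\pi(g_j)^*\pi(g_i)V=V^*\pi(g_j^{-1})\pi(\alpha(g_i))V=\varphi(g_j^{-1}\alpha(g_i))=\varphi(\alpha(g_j)^{-1}g_i),
\]
and the same holds verbatim for the dilation $(\pi_\varphi,\mathcal{H}_\varphi,\mathcal{J}_\varphi,V_\varphi)$. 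Thus both inner products equal $\sum_{i,j}\langle\varphi(\alpha(g_j)^{-1}g_i)\xi_i,\eta_j\rangle$, so $U$ is a well-defined isometry with dense range and extends to a unitary $\mathcal{H}_\varphi\to\mathcal{K}$.

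The remaining intertwining relations are then formal. Evaluating $U$ on spanning vectors and using $\pi_\varphi(g)\pi_\varphi(g_i)=\pi_\varphi(gg_i)$ gives $UV_\varphi=V$ and $U\pi_\varphi(g)=\pi(g)U$, while condition (c) in both spaces yields $U\mathcal{J}_\varphi\pi_\varphi(g)V_\varphi=U\pi_\varphi(\alpha(g))V_\varphi=\pi(\alpha(g))V=\mathcal{J}\pi(g)V=\mathcal{J}U\pi_\varphi(g)V_\varphi$, i.e.\ $U\mathcal{J}_\varphi=\mathcal{J}U$. I expect the main obstacle to be the displayed computation in part (2): it is precisely where the indefinite inner product enters, and collapsing $V^*\pi(g_j)^*\pi(g_i)V$ to a value of $\varphi$ requires combining the $\mathcal{J}$-unitarity, the relation $\mathcal{J}V=V$, and condition (1) in the correct order.
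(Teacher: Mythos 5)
Your proposal is correct and follows essentially the same route as the paper: part (1) is the same GNS-type construction on finitely supported $\mathcal{H}$-valued functions (the same positive semidefinite form, the translation representation, $\mathcal{J}_{\varphi}f=f\circ \alpha$, and $V_{\varphi }\xi =\delta _{e}\otimes \xi $), and part (2) defines the same intertwiner $U\left( \sum_{i}\pi _{\varphi }\left( g_{i}\right) V_{\varphi }\xi _{i}\right) =\sum_{i}\pi \left( g_{i}\right) V\xi _{i}$ and proves it is a well-defined unitary by reducing both inner products to $\sum_{i,j}\left\langle \varphi \left( \alpha \left( g_{j}\right) ^{-1}g_{i}\right) \xi _{i},\eta _{j}\right\rangle $. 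The only cosmetic difference is that you derive the key identity $V^{\ast }\pi \left( g_{j}\right) ^{\ast }\pi \left( g_{i}\right) V=\varphi \left( \alpha \left( g_{j}\right) ^{-1}g_{i}\right) $ inline from hypothesis (c) (via $\mathcal{J}V=V$ and $\mathcal{J}$-unitarity), whereas the paper invokes the equivalence recorded in its Remark 2.5.
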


\begin{proof}
(1). We will give a sketch of proof (see \cite[Theorem 2.2]{H} and Remark
2.5 for the detailed proof). Let $\mathcal{F}(G,\mathcal{H})$ be the vector
space of all functions from $G$ to $\mathcal{H}$ with finite support. The
map $\left\langle \cdot ,\cdot \right\rangle :\mathcal{F}(G,\mathcal{H}%
)\times \mathcal{F}(G,\mathcal{H})\rightarrow \mathbb{C}$ defined by 
\begin{equation*}
\left\langle f_{1},f_{2}\right\rangle =\tsum\limits_{g,g^{\prime
}}\left\langle f_{1}\left( g\right) ,\varphi \left( \alpha \left(
g^{-1}\right) g^{\prime }\right) f_{2}\left( g^{\prime }\right) \right\rangle
\end{equation*}%
is a positive semi-definite sesquilinear form and $\mathcal{H}_{\varphi }$
is the Hilbert space obtained by the completion of the pre-Hilbert space $%
\mathcal{F}(G,\mathcal{H})/\mathcal{N}_{\varphi }$, where $\mathcal{N}%
_{\varphi }=\{f\in \mathcal{F}(G,\mathcal{H})/$\ $\left\langle
f,f\right\rangle =0\}$.

The linear map $\mathcal{J}_{\varphi }:\mathcal{F}(G,\mathcal{H})\rightarrow 
\mathcal{F}(G,\mathcal{H})$ given by $\mathcal{J}_{\varphi }\left( f\right)
=f\circ \alpha $ extends to a bounded linear operator $\mathcal{J}_{\varphi
}:\mathcal{H}_{\varphi }\rightarrow \mathcal{H}_{\varphi }.$ Moreover, $%
\mathcal{J}_{\varphi }=\mathcal{J}_{\varphi }^{\ast }=\mathcal{J}_{\varphi
}^{-1}\ $and $\left( \mathcal{H}_{\varphi },\mathcal{J}_{\varphi }\right) $
is a Krein space. For each $g\in G$, the map $\pi _{\varphi }\left( g\right)
:\mathcal{F}(G,\mathcal{H})\rightarrow \mathcal{F}(G,\mathcal{H})$ given by $%
\pi _{\varphi }\left( g\right) \left( f\right) \left( g^{\prime }\right)
=f\left( g^{-1}g^{\prime }\right) $ extends to a bounded linear operator
from $\mathcal{H}_{\varphi }$ to $\mathcal{H}_{\varphi }$, and the map $%
g\mapsto \pi _{\varphi }\left( g\right) $ is a $\mathcal{J}_{\varphi }$%
-unitary representation $\pi _{\varphi }$ of $G$ on $\left( \mathcal{H}%
_{\varphi },\mathcal{J}_{\varphi }\right) $. The linear map $V_{\varphi }:%
\mathcal{H}\rightarrow \mathcal{F}(G,\mathcal{H})$ given by $V_{\varphi }\xi
=\xi \delta _{e}$, where $\delta _{e}:G\rightarrow \mathbb{C},\delta
_{e}\left( g\right) =0$ if $g\neq e$ and $\delta _{e}\left( e\right) =1.$

(2). We consider the linear map $U:$span$\{\pi _{\varphi }\left( g\right)
V_{\varphi }\xi /g\in G,\xi \in \mathcal{H}\}\rightarrow $span$\{\pi \left(
g\right) V\xi /g\in G,\xi \in \mathcal{H}\}$ defined by 
\begin{equation*}
U\left( \pi _{\varphi }\left( g\right) V_{\varphi }\xi \right) =\pi \left(
g\right) V\xi .
\end{equation*}%
Since 
\begin{eqnarray*}
&&\left\langle U\left( \tsum_{i=1}^{n}\pi _{\varphi }\left( g_{i}\right)
V_{\varphi }\xi _{i}\right) ,U\left( \tsum_{j=1}^{m}\pi _{\varphi }\left(
g_{j}^{\prime }\right) V_{\varphi }\zeta _{j}\right) \right\rangle \\
&=&\tsum_{i=1}^{n}\tsum_{j=1}^{m}\left\langle \pi \left( g_{i}\right) V\xi
_{i},\pi \left( g_{j}^{\prime }\right) V\zeta _{j}\right\rangle \\
&=&\tsum_{i=1}^{n}\tsum_{j=1}^{m}\left\langle \left( V^{\ast }\pi \left(
g_{j}^{\prime }\right) ^{\ast }\pi \left( g_{i}\right) V\xi _{i}\right)
,\zeta _{j}\right\rangle \\
&=&\tsum_{i=1}^{n}\tsum_{j=1}^{m}\left\langle V^{\ast }\pi \left( \alpha
\left( g_{j}^{\prime -1}\right) g_{i}\right) V\xi _{i},\zeta
_{j}\right\rangle \\
&=&\tsum_{i=1}^{n}\tsum_{j=1}^{m}\left\langle \varphi \left( \alpha \left(
g_{j}^{\prime -1}\right) g_{i}\right) \xi _{i},\zeta _{j}\right\rangle \\
&=&\tsum_{i=1}^{n}\tsum_{j=1}^{m}\left\langle V_{\varphi }^{\ast }\pi
_{\varphi }\left( \alpha \left( g_{j}^{\prime -1}\right) g_{i}\right)
V_{\varphi }\xi _{i},\zeta _{j}\right\rangle \\
&=&\left\langle \tsum_{i=1}^{n}\pi _{\varphi }\left( g_{i}\right) V_{\varphi
}\xi _{i},\tsum_{j=1}^{m}\pi _{\varphi }\left( g_{j}^{\prime }\right)
V_{\varphi }\zeta _{j}\right\rangle
\end{eqnarray*}%
for all $g_{1},...,g_{n},g_{1}^{\prime },...,g_{m}^{\prime }\in G$ and for
all $\xi _{1},...,\xi _{n},\zeta _{1},...,\zeta _{m}\in \mathcal{H},$ $U$
extends to a unitary operator $U$ from $\mathcal{H}_{\varphi }$ to $\mathcal{%
K}$. Moreover, $U\pi _{\varphi }(g)=\pi \left( g\right) U$ for all $g\in G$
and $UV_{\varphi }=V$. Since 
\begin{eqnarray*}
U\mathcal{J}_{\varphi }\left( \pi _{\varphi }\left( g\right) V_{\varphi }\xi
\right) &=&U\left( \pi _{\varphi }\left( \alpha \left( g\right) \right)
V_{\varphi }\xi \right) =\pi \left( \alpha \left( g\right) \right) V\xi \\
&=&\mathcal{J}\left( \pi \left( g\right) V\xi \right) =\mathcal{J}U\left(
\pi _{\varphi }\left( g\right) V_{\varphi }\xi \right)
\end{eqnarray*}%
for all $g\in G$ and for all $\xi \in \mathcal{H}$, and since $\left[ \pi
_{\varphi }\left( G\right) V_{\varphi }\mathcal{H}\right] =\mathcal{H}%
_{\varphi },$ we have $U\mathcal{J}_{\varphi }=\mathcal{J}U.$
\end{proof}

If $G$ is a topological group and $\varphi $ is bounded, then the $\mathcal{J%
}_{\varphi }$-unitary representation $\pi _{\varphi }$ is strictly
continuous.

The triple $\left( \pi _{\varphi },\left( \mathcal{H}_{\varphi },\mathcal{J}%
_{\varphi }\right) ,V_{\varphi }\right) $ is called the minimal Stinespring
construction associated to $\varphi $.

\section{ Radon-Nicodym type theorem for $\protect\alpha $-completely
positive maps}

Let $G$ be a group with an involution $\alpha $, $\mathcal{H}$ a Hilbert
space and $\alpha -CP(G,\mathcal{H})=\{\varphi :G\rightarrow L(\mathcal{H}%
);\varphi $ is $\alpha $-completely positive$\}$.

Let $\varphi \in \alpha -CP(G,\mathcal{H})$ and let $\left( \pi _{\varphi
},\left( \mathcal{H}_{\varphi },\mathcal{J}_{\varphi }\right) ,V_{\varphi
}\right) $ be the minimal Stinespring construction associated to $\varphi .$

\begin{proposition}
Let $T\in $ $\pi _{\varphi }\left( G\right) ^{\prime }\subseteq L\left( 
\mathcal{H}_{\varphi }\right) $ such that $\ T\geq 0\ $and $T\mathcal{J}%
_{\varphi }=\mathcal{J}_{\varphi }T$. Then the map $\varphi
_{T}:G\rightarrow L(\mathcal{H}$ $)$ defined by $\varphi _{T}\left( g\right)
=V_{\varphi }^{\ast }T\pi _{\varphi }\left( g\right) V_{\varphi }$ is $%
\alpha $-completely positive.
\end{proposition}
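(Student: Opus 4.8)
The plan is to verify the four defining conditions of an $\alpha$-completely positive map (Definition 2.1) directly for $\varphi_T(g)=V_\varphi^*T\pi_\varphi(g)V_\varphi$, exploiting the three properties of the minimal Stinespring construction in Theorem 2.7(1) together with the two hypotheses $T\geq 0$ and $T\mathcal{J}_\varphi=\mathcal{J}_\varphi T$, and the fact that $T\in\pi_\varphi(G)'$ commutes with every $\pi_\varphi(g)$. The key technical device will be to write $T=S^*S=S^2$ where $S=T^{1/2}$ is the positive square root; since $T$ commutes with each $\pi_\varphi(g)$ and with $\mathcal{J}_\varphi$, so does $S$ (the square root lies in the bidual von Neumann algebra generated by $T$, hence in $\{\pi_\varphi(G),\mathcal{J}_\varphi\}'$). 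This lets me absorb $T$ symmetrically into the two legs of every bilinear expression.

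First I would establish condition (1), the symmetry $\varphi_T(\alpha(g_1)\alpha(g_2))=\varphi_T(\alpha(g_1g_2))=\varphi_T(g_1g_2)$. Since $\alpha$ is an involution with $\alpha(g_1)\alpha(g_2)$ and $\alpha(g_1g_2)=\alpha(g_2)\alpha(g_1)$ related through the representation, the cleanest route is to show $\varphi_T(\alpha(g))=\varphi_T(g)$ for all $g$, which reduces the three-way equality to the single identity already available for $\varphi$. Using property (c) of Theorem 2.7, $\mathcal{J}_\varphi\pi_\varphi(g)V_\varphi=\pi_\varphi(\alpha(g))V_\varphi$, together with $V_\varphi^*\mathcal{J}_\varphi=(\mathcal{J}_\varphi V_\varphi)^*=(\ldots)$ and the commutation $T\mathcal{J}_\varphi=\mathcal{J}_\varphi T$, I can compute
\begin{equation*}
\varphi_T(\alpha(g))=V_\varphi^*T\pi_\varphi(\alpha(g))V_\varphi=V_\varphi^*T\mathcal{J}_\varphi\pi_\varphi(g)V_\varphi,
\end{equation*}
and then slide $\mathcal{J}_\varphi$ across $T$ and onto $V_\varphi^*$ (using the analog of property (c) in adjoint form, namely $V_\varphi^*\pi_\varphi(g)^*\mathcal{J}_\varphi=V_\varphi^*\pi_\varphi(\alpha(g))^*$ from Remark 2.5) to recover $\varphi_T(g)$.

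Next I would handle the two positivity/boundedness conditions (2) and (3), and condition (4), all of which follow from the square-root trick. For condition (2), the matrix $[\varphi_T(\alpha(g_i)^{-1}g_j)]_{i,j}$ factors as $[V_\varphi^*S\,\pi_\varphi(\alpha(g_i)^{-1}g_j)\,SV_\varphi]$; invoking the identity $\pi_\varphi(\alpha(g_i)^{-1}g_j)=\pi_\varphi(g_i)^*\pi_\varphi(g_j)$ on the range of $V_\varphi$ (condition (3) of Theorem 2.7, in operator form $V_\varphi^*\pi_\varphi(g)^*\pi_\varphi(g')V_\varphi=V_\varphi^*\pi_\varphi(\alpha(g^{-1})g')V_\varphi$) and that $S$ commutes with every $\pi_\varphi(g)$, the matrix becomes $[(\pi_\varphi(g_i)SV_\varphi)^*(\pi_\varphi(g_j)SV_\varphi)]$, a Gram matrix, hence positive. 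For condition (3), I would bound $[\varphi_T(g_i)^*\varphi_T(g_j)]\leq K[\varphi_T(\alpha(g_i)^{-1}g_j)]$ by comparing with the corresponding inequality for $\varphi$ itself: since $T$ (equivalently $S$) is a fixed bounded operator commuting with the representation, the introduction of $T$ scales both sides compatibly, and the constant $K$ for $\varphi$ together with $\|T\|$ yields a constant for $\varphi_T$. Condition (4) is entirely parallel, replacing $g_i$ by $gg_i$ throughout and using that the defining matrix inequality for $\varphi$ with shift $g$ transports through $T$ because $T\in\pi_\varphi(G)'$.

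**The main obstacle** I expect is condition (3) (and the analogous (4)): unlike the pure positivity in (2), which is automatic from the Gram structure, the comparison inequality requires relating $[\varphi_T(g_i)^*\varphi_T(g_j)]$ to $[\varphi_T(\alpha(g_i)^{-1}g_j)]$ as operator matrices, and the factor $T$ sits asymmetrically in $\varphi_T(g)^*\varphi_T(g')=V_\varphi^*\pi_\varphi(g)^*TV_\varphi V_\varphi^*T\pi_\varphi(g')V_\varphi$, which contains the extra middle term $V_\varphi V_\varphi^*$ that does not simplify as cleanly. The careful point will be to control this middle factor — either by bounding $V_\varphi V_\varphi^*\leq\|V_\varphi\|^2\,\mathrm{id}$ and folding the resulting constant into $K$, or by re-deriving the comparison from the boundedness hypotheses on $\varphi$ after commuting $T$ (via $S$) outward. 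I would verify that the resulting constant is finite and uniform in $n$, since the matrix inequalities in Definition 2.1 must hold for all $n$ simultaneously with a single $K$ (and a single $M(g)$ for each $g$).
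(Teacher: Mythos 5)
The critical gap is condition (1). In this paper $\alpha$ is only assumed to satisfy $\alpha ^{2}=\mathrm{id}_{G}$, $\alpha (e)=e$ and $\alpha (g^{-1})=\alpha (g)^{-1}$; it is neither multiplicative nor anti-multiplicative, so your premise $\alpha (g_{1}g_{2})=\alpha (g_{2})\alpha (g_{1})$ has no basis, and with it your reduction collapses. Proving $\varphi _{T}(\alpha (g))=\varphi _{T}(g)$ for all $g$ gives only the second equality $\varphi _{T}(\alpha (g_{1}g_{2}))=\varphi _{T}(g_{1}g_{2})$; it does not give $\varphi _{T}(\alpha (g_{1})\alpha (g_{2}))=\varphi _{T}(g_{1}g_{2})$, since applying $\varphi _{T}\circ \alpha =\varphi _{T}$ to the element $\alpha (g_{1})\alpha (g_{2})$ produces $\varphi _{T}(\alpha (\alpha (g_{1})\alpha (g_{2})))$, and $\alpha (\alpha (g_{1})\alpha (g_{2}))$ need not equal $g_{1}g_{2}$. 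Nor can you import the identity ``already available for $\varphi $'': condition (1) for $\varphi $ is a sandwiched identity (of the form $V_{\varphi }^{\ast }\pi _{\varphi }(\cdot )V_{\varphi }$), and such identities do not survive the insertion of $T$ in the middle. The missing equality is the actual content of condition (1), and it is exactly where the paper performs its longest computation: write $\pi _{\varphi }(\alpha (g_{2}))V_{\varphi }=\mathcal{J}_{\varphi }\pi _{\varphi }(g_{2})V_{\varphi }$, slide $\mathcal{J}_{\varphi }$ leftward through $\pi _{\varphi }(\alpha (g_{1}))$ by $\mathcal{J}$-unitarity, commute it with $T$, and absorb it into $V_{\varphi }^{\ast }$ via $V_{\varphi }^{\ast }\mathcal{J}_{\varphi }=V_{\varphi }^{\ast }$, arriving at $V_{\varphi }^{\ast }T\pi _{\varphi }(g_{1}g_{2})V_{\varphi }$. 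You have all the ingredients but never perform (or reduce to) this step, so as written the proof of condition (1) fails.

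The same confusion between sandwiched and pointwise identities weakens your condition (2): the claimed identity $\pi _{\varphi }(\alpha (g_{i})^{-1}g_{j})=\pi _{\varphi }(g_{i})^{\ast }\pi _{\varphi }(g_{j})$ on the range of $V_{\varphi }$ is false in general, and the sandwiched identity $V_{\varphi }^{\ast }\pi _{\varphi }(g)^{\ast }\pi _{\varphi }(g^{\prime })V_{\varphi }=V_{\varphi }^{\ast }\pi _{\varphi }(\alpha (g^{-1})g^{\prime })V_{\varphi }$ cannot be invoked once $S=T^{1/2}$ is inserted, because your expression is then sandwiched by $SV_{\varphi }$ rather than by $V_{\varphi }$; Remark 2.6 of the paper exists precisely because this weaker identity is insufficient. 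What works, and what the paper uses, is the pointwise property (c), $\mathcal{J}_{\varphi }\pi _{\varphi }(g)V_{\varphi }=\pi _{\varphi }(\alpha (g))V_{\varphi }$, which together with $\mathcal{J}$-unitarity yields the operator identity $\pi _{\varphi }(\alpha (g^{-1}))^{\ast }V_{\varphi }=\pi _{\varphi }(g)V_{\varphi }$; since $S$ commutes with every $\pi _{\varphi }(g)$, with every $\pi _{\varphi }(g)^{\ast }$ (as $\pi _{\varphi }(g)^{\ast }=\mathcal{J}_{\varphi }\pi _{\varphi }(g^{-1})\mathcal{J}_{\varphi }$) and with $\mathcal{J}_{\varphi }$, this identity persists with $SV_{\varphi }$ in place of $V_{\varphi }$, and the Gram form follows. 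By contrast, your handling of conditions (3) and (4), bounding $TV_{\varphi }V_{\varphi }^{\ast }T\leq \left\Vert V_{\varphi }\right\Vert ^{2}\left\Vert T\right\Vert T$ and using the square root of $T$ to get $M(g)=\left\Vert \pi _{\varphi }(g)\right\Vert ^{2}$, matches the paper's argument and is sound once (1) and (2) are repaired.
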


\begin{proof}
From 
\begin{eqnarray*}
\varphi _{T}\left( \alpha \left( g_{1}\right) \alpha \left( g_{2}\right)
\right) &=&V_{\varphi }^{\ast }T\pi _{\varphi }\left( \alpha \left(
g_{1}\right) \right) \pi _{\varphi }\left( \alpha \left( g_{2}\right)
\right) V_{\varphi } \\
&=&V_{\varphi }^{\ast }T\pi _{\varphi }\left( \alpha \left( g_{1}\right)
\right) \mathcal{J}_{\varphi }\pi _{\varphi }\left( g_{2}\right) V_{\varphi }
\\
&=&V_{\varphi }^{\ast }\mathcal{J}_{\varphi }\pi _{\varphi }\left( \alpha
\left( g_{1}\right) \right) \mathcal{J}_{\varphi }T\pi _{\varphi }\left(
g_{2}\right) V_{\varphi } \\
&=&V_{\varphi }^{\ast }\pi _{\varphi }\left( \alpha \left( g_{1}^{-1}\right)
\right) ^{\ast }T\pi _{\varphi }\left( g_{2}\right) V_{\varphi } \\
&=&V_{\varphi }^{\ast }\pi _{\varphi }\left( g_{1}^{-1}\right) ^{\ast }%
\mathcal{J}_{\varphi }T\pi _{\varphi }\left( g_{2}\right) V_{\varphi } \\
&=&V_{\varphi }^{\ast }\mathcal{J}_{\varphi }\pi _{\varphi }\left(
g_{1}\right) T\pi _{\varphi }\left( g_{2}\right) V_{\varphi } \\
&=&V_{\varphi }^{\ast }T\pi _{\varphi }\left( g_{1}g_{2}\right) V_{\varphi
}=\varphi _{T}\left( g_{1}g_{2}\right)
\end{eqnarray*}%
and 
\begin{eqnarray*}
\varphi _{T}\left( \alpha \left( g_{1}g_{2}\right) \right) &=&V_{\varphi
}^{\ast }T\pi _{\varphi }\left( \alpha \left( g_{1}g_{2}\right) \right)
V_{\varphi }=V_{\varphi }^{\ast }T\mathcal{J}_{\varphi }\pi _{\varphi
}\left( g_{1}g_{2}\right) V_{\varphi } \\
&=&V_{\varphi }^{\ast }\mathcal{J}_{\varphi }T\pi _{\varphi }\left(
g_{1}g_{2}\right) V_{\varphi }=V_{\varphi }^{\ast }T\pi _{\varphi }\left(
g_{1}g_{2}\right) V_{\varphi }=\varphi _{T}\left( g_{1}g_{2}\right)
\end{eqnarray*}%
for all $g_{1},g_{2}\in G$, we deduce that $\varphi _{T}\left( \alpha \left(
g_{1}\right) \alpha \left( g_{2}\right) \right) =\varphi _{T}\left( \alpha
\left( g_{1}g_{2}\right) \right) =\varphi _{T}\left( g_{1}g_{2}\right) $ for
all $g_{1},g_{2}\in G$.

Let $g_{1},...,g_{n}\in G$ and $\xi _{1},...,\xi _{n}\in \mathcal{H}$. Then 
\begin{eqnarray*}
&&\left\langle \left[ \varphi _{T}\left( \alpha \left( g_{i}^{-1}\right)
g_{j}\right) \right] _{i,j=1}^{n}\left( \xi _{k}\right) _{k=1}^{n},\left(
\xi _{k}\right) _{k=1}^{n}\right\rangle \\
&=&\tsum\limits_{i,j=1}^{n}\left\langle \varphi _{T}\left( \alpha \left(
g_{i}^{-1}\right) g_{j}\right) \xi _{j},\xi _{i}\right\rangle \\
&=&\tsum\limits_{i,j=1}^{n}\left\langle V_{\varphi }^{\ast }T\pi _{\varphi
}\left( \alpha \left( g_{i}^{-1}\right) g_{j}\right) V_{\varphi }\xi
_{j},\xi _{i}\right\rangle \\
&=&\tsum\limits_{i,j=1}^{n}\left\langle T\pi _{\varphi }\left( g_{j}\right)
V_{\varphi }\xi _{j},\pi _{\varphi }\left( \alpha \left( g_{i}^{-1}\right)
\right) ^{\ast }V_{\varphi }\xi _{i}\right\rangle \\
&=&\tsum\limits_{i,j=1}^{n}\left\langle T\pi _{\varphi }\left( g_{j}\right)
V_{\varphi }\xi _{j},\mathcal{J}_{\varphi }\pi _{\varphi }\left( \alpha
\left( g_{i}\right) \right) \mathcal{J}_{\varphi }V_{\varphi }\xi
_{i}\right\rangle \\
&=&\tsum\limits_{i,j=1}^{n}\left\langle T\pi _{\varphi }\left( g_{j}\right)
V_{\varphi }\xi _{j},\mathcal{J}_{\varphi }\pi _{\varphi }\left( \alpha
\left( g_{i}\right) \right) V_{\varphi }\xi _{i}\right\rangle \\
&=&\left\langle T\tsum\limits_{j=1}^{n}\pi _{\varphi }\left( g_{j}\right)
V_{\varphi }\xi _{j},\tsum\limits_{i=1}^{n}\pi _{\varphi }\left(
g_{i}\right) V_{\varphi }\xi _{i}\right\rangle \geq 0
\end{eqnarray*}%
and

\begin{eqnarray*}
&&\left\langle \left[ \varphi _{T}\left( g_{i}\right) ^{\ast }\varphi
_{T}\left( g_{j}\right) \right] _{i,j=1}^{n}\left( \xi _{k}\right)
_{k=1}^{n},\left( \xi _{k}\right) _{k=1}^{n}\right\rangle \\
&=&\tsum\limits_{i,j=1}^{n}\left\langle V_{\varphi }^{\ast }T\pi _{\varphi
}\left( g_{j}\right) V_{\varphi }\xi _{j},V_{\varphi }^{\ast }T\pi _{\varphi
}\left( g_{i}\right) V_{\varphi }\xi _{i}\right\rangle \\
&=&\left\langle T^{\ast }V_{\varphi }V_{\varphi }^{\ast
}T\tsum\limits_{j=1}^{n}\pi _{\varphi }\left( g_{j}\right) V_{\varphi }\xi
_{j},\tsum\limits_{i=1}^{n}\pi _{\varphi }\left( g_{i}\right) V_{\varphi
}\xi _{i}\right\rangle \\
&\leq &\left\Vert V_{\varphi }\right\Vert ^{2}\left\Vert T\right\Vert
\tsum\limits_{i,j=1}^{n}\left\langle V_{\varphi }^{\ast }\mathcal{J}%
_{\varphi }\pi _{\varphi }\left( g_{i}^{-1}\right) \mathcal{J}_{\varphi
}T\pi _{\varphi }\left( g_{j}\right) V_{\varphi }\xi _{j},\xi
_{i}\right\rangle \\
&=&\left\Vert V_{\varphi }\right\Vert ^{2}\left\Vert T\right\Vert
\tsum\limits_{i,j=1}^{n}\left\langle V_{\varphi }^{\ast }T\pi _{\varphi
}\left( g_{i}^{-1}\right) \pi _{\varphi }\left( \alpha \left( g_{j}\right)
\right) V_{\varphi }\xi _{j},\xi _{i}\right\rangle \\
&=&\left\Vert V_{\varphi }\right\Vert ^{2}\left\Vert T\right\Vert
\tsum\limits_{i,j=1}^{n}\left\langle V_{\varphi }^{\ast }T\pi _{\varphi
}\left( g_{i}^{-1}\alpha \left( g_{j}\right) \right) V_{\varphi }\xi
_{j},\xi _{i}\right\rangle \\
&=&\left\Vert V_{\varphi }\right\Vert ^{2}\left\Vert T\right\Vert
\tsum\limits_{i,j=1}^{n}\left\langle \varphi _{T}\left( g_{i}^{-1}\alpha
\left( g_{j}\right) \right) \xi _{j},\xi _{i}\right\rangle \\
&=&\left\Vert V_{\varphi }\right\Vert ^{2}\left\Vert T\right\Vert
\tsum\limits_{i,j=1}^{n}\left\langle \varphi _{T}\left( \alpha \left(
g_{i}^{-1}\right) g_{j}\right) \xi _{j},\xi _{i}\right\rangle \\
&=&\left\Vert V_{\varphi }\right\Vert ^{2}\left\Vert T\right\Vert
\left\langle \left[ \varphi _{T}\left( \alpha \left( g_{i}^{-1}\right)
g_{j}\right) \right] _{i,j=1}^{n}\left( \xi _{k}\right) _{k=1}^{n},\left(
\xi _{k}\right) _{k=1}^{n}\right\rangle .
\end{eqnarray*}%
From these relations, we deduce that $\varphi _{T}$ verifies the conditions $%
(2)$ and $(3)$ from Definition 2.1.

Let $g\in G$. From 
\begin{eqnarray*}
&&\left\langle \left[ \varphi _{T}\left( \alpha \left( gg_{i}\right)
^{-1}gg_{j}\right) \right] _{i,j=1}^{n}\left( \xi _{k}\right)
_{k=1}^{n},\left( \xi _{k}\right) _{k=1}^{n}\right\rangle \\
&=&\tsum\limits_{i,j=1}^{n}\left\langle V_{\varphi }^{\ast }T\pi _{\varphi
}\left( \alpha \left( gg_{i}\right) ^{-1}\right) \pi _{\varphi }\left(
gg_{j}\right) V_{\varphi }\xi _{j},\xi _{i}\right\rangle \\
&=&\tsum\limits_{i,j=1}^{n}\left\langle T\pi _{\varphi }\left( gg_{j}\right)
V_{\varphi }\xi _{j},\mathcal{J}_{\varphi }\pi _{\varphi }\left( \alpha
\left( gg_{i}\right) \right) \mathcal{J}_{\varphi }V_{\varphi }\xi
_{i}\right\rangle \\
&=&\tsum\limits_{i,j=1}^{n}\left\langle T\pi _{\varphi }\left( g\right) \pi
_{\varphi }\left( g_{j}\right) V_{\varphi }\xi _{j},\pi _{\varphi }\left(
gg_{i}\right) V_{\varphi }\xi _{i}\right\rangle \\
&=&\left\langle \pi _{\varphi }\left( g\right) ^{\ast }\pi _{\varphi }\left(
g\right) \tsum\limits_{j=1}^{n}\left\vert T\right\vert \pi _{\varphi }\left(
g_{j}\right) V_{\varphi }\xi _{j},\tsum\limits_{i=1}^{n}\left\vert
T\right\vert \pi _{\varphi }\left( g_{i}\right) V_{\varphi }\xi
_{i}\right\rangle \\
&\leq &\left\Vert \pi _{\varphi }\left( g\right) \right\Vert
^{2}\left\langle T\tsum\limits_{j=1}^{n}\pi _{\varphi }\left( g_{j}\right)
V_{\varphi }\xi _{j},\tsum\limits_{i=1}^{n}\pi _{\varphi }\left(
g_{i}\right) V_{\varphi }\xi _{i}\right\rangle \\
&=&\left\Vert \pi _{\varphi }\left( g\right) \right\Vert ^{2}\left\langle 
\left[ \varphi _{T}\left( \alpha \left( g_{i}^{-1}\right) g_{j}\right) %
\right] _{i,j=1}^{n}\left( \xi _{k}\right) _{k=1}^{n},\left( \xi _{k}\right)
_{k=1}^{n}\right\rangle
\end{eqnarray*}%
for all $g_{1},...,g_{n}\in G$ and $\xi _{1},...,\xi _{n}\in \mathcal{H}$,
we deduce that $\varphi _{T}$ verifies the condition $(4)$ from Definition
2.1.
\end{proof}

Let $\varphi ,\psi $ be two $\alpha $-completely positive maps. We say that$%
\ \psi \leq \varphi \ \ $if $\varphi -\psi $ is $\alpha $-completely
positive map, and $\psi $ is \textit{uniformly dominated} by $\varphi $,
denoted by $\psi \leq _{u}\varphi $, if there is $\lambda >0$ such that $%
\psi \leq \lambda \varphi $. The $\alpha $-completely positive maps $\varphi
,\psi $ are \textit{uniformly equivalent, }$\psi \equiv _{u}\varphi $, if $%
\psi \leq _{u}\varphi $ and $\varphi \leq _{u}\psi $.

\begin{proposition}
Let $\varphi ,\psi $ be two $\alpha $-completely positive maps from $G$ to $%
L(\mathcal{H})$. If $\psi \leq _{u}\varphi $,$\ $then there is $T\in $ $\pi
_{\varphi }\left( G\right) ^{\prime }\subseteq L\left( \mathcal{H}_{\varphi
}\right) ,\ T\geq 0\ $and $T\mathcal{J}_{\varphi }=\mathcal{J}_{\varphi }T$
such that $\psi =\varphi _{T}$. Moreover, $T\ $is unique.
\end{proposition}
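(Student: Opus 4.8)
The plan is to follow the classical Radon--Nikodym strategy for completely positive maps, transplanted to the Krein-space minimal Stinespring construction of Theorem 2.7. Since $\psi\leq_u\varphi$, fix $\lambda>0$ with $\psi\leq\lambda\varphi$, so that $\lambda\varphi-\psi$ is again $\alpha$-completely positive. On the space $\mathcal{F}(G,\mathcal{H})$ used to build $\mathcal{H}_\varphi$ I would introduce the sesquilinear form
\[
B_0(f_1,f_2)=\sum_{g,g'}\big\langle f_1(g),\psi(\alpha(g^{-1})g')f_2(g')\big\rangle ,
\]
which is the exact analogue, with $\psi$ in place of $\varphi$, of the form defining $\langle\cdot,\cdot\rangle_\varphi$. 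Under the correspondence $\xi\delta_g\leftrightarrow\pi_\varphi(g)V_\varphi\xi$ this is the form determined on the dense linear span of $\{\pi_\varphi(g)V_\varphi\xi:g\in G,\ \xi\in\mathcal{H}\}$ by $B_0(\pi_\varphi(g)V_\varphi\xi,\pi_\varphi(g')V_\varphi\zeta)=\langle\psi(\alpha(g'^{-1})g)\xi,\zeta\rangle$.

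The decisive step, which I expect to be the main obstacle, is to pass $B_0$ to a bounded form on $\mathcal{H}_\varphi$. Condition (2) of Definition 2.1 applied to $\psi$ gives $B_0(f,f)\geq 0$, while the same condition applied to $\lambda\varphi-\psi$ gives $B_0(f,f)\leq\lambda\langle f,f\rangle_\varphi$ for every $f\in\mathcal{F}(G,\mathcal{H})$. Hence, if $f\in\mathcal{N}_\varphi$ then $B_0(f,f)=0$, and the Cauchy--Schwarz inequality for the positive semidefinite form $B_0$ forces $B_0(f,\cdot)=0$; this shows $B_0$ is constant on $\mathcal{N}_\varphi$-cosets and therefore descends to a well-defined form on $\mathcal{F}(G,\mathcal{H})/\mathcal{N}_\varphi$. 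The bound $0\leq B_0(f,f)\leq\lambda\|\bar f\|^2$ then makes it continuous, so it extends to a bounded positive sesquilinear form $B$ on $\mathcal{H}_\varphi$ with $B(\eta,\eta)\leq\lambda\|\eta\|^2$. By the Riesz representation theorem there is a unique $T\in L(\mathcal{H}_\varphi)$ with $B(\eta_1,\eta_2)=\langle T\eta_1,\eta_2\rangle$, and $T\geq 0$ (indeed $\|T\|\leq\lambda$) because $B$ is positive.

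Next I would verify the three defining properties of $T$ by evaluating on the spanning vectors, using repeatedly relation (c) of Theorem 2.7 in the form $\mathcal{J}_\varphi\pi_\varphi(g)V_\varphi=\pi_\varphi(\alpha(g))V_\varphi$ and the $\mathcal{J}_\varphi$-unitarity $\pi_\varphi(g)^*=\mathcal{J}_\varphi\pi_\varphi(g^{-1})\mathcal{J}_\varphi$. For $T\in\pi_\varphi(G)'$ I compute $\langle T\pi_\varphi(h)\pi_\varphi(g)V_\varphi\xi,\pi_\varphi(g')V_\varphi\zeta\rangle$ and $\langle T\pi_\varphi(g)V_\varphi\xi,\pi_\varphi(h)^*\pi_\varphi(g')V_\varphi\zeta\rangle$; rewriting $\pi_\varphi(h)^*\pi_\varphi(g')V_\varphi$ as $\pi_\varphi(\alpha(h^{-1}\alpha(g')))V_\varphi$ and simplifying the involution, both expressions collapse to $\langle\psi(\alpha(g'^{-1})hg)\xi,\zeta\rangle$, whence $T\pi_\varphi(h)=\pi_\varphi(h)T$. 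A parallel computation shows $\langle T\mathcal{J}_\varphi\eta_1,\eta_2\rangle$ and $\langle T\eta_1,\mathcal{J}_\varphi\eta_2\rangle$ both equal $\langle\psi(g'^{-1}g)\xi,\zeta\rangle$ (here invoking property (1) of Definition 2.1 for $\psi$), giving $T\mathcal{J}_\varphi=\mathcal{J}_\varphi T$. Finally, pairing against $V_\varphi\zeta=\pi_\varphi(e)V_\varphi\zeta$ yields $\langle V_\varphi^*T\pi_\varphi(g)V_\varphi\xi,\zeta\rangle=\langle\psi(g)\xi,\zeta\rangle$, i.e. $\varphi_T=\psi$.

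For uniqueness, suppose $T'$ shares the three properties and $\varphi_{T'}=\psi$, and put $S=T-T'$. Then $S$ commutes with every $\pi_\varphi(g)$ and with $\mathcal{J}_\varphi$, hence also with each $\pi_\varphi(g)^*=\mathcal{J}_\varphi\pi_\varphi(g^{-1})\mathcal{J}_\varphi$, while $V_\varphi^*S\pi_\varphi(h)V_\varphi=0$ for all $h\in G$. Then
\[
\langle S\pi_\varphi(g)V_\varphi\xi,\pi_\varphi(g')V_\varphi\zeta\rangle
=\langle S\,\pi_\varphi(g')^*\pi_\varphi(g)V_\varphi\xi,V_\varphi\zeta\rangle
=\langle V_\varphi^*S\pi_\varphi(h)V_\varphi\xi,\zeta\rangle=0,
\]
where $h=\alpha(g'^{-1}\alpha(g))$ comes from rewriting $\pi_\varphi(g')^*\pi_\varphi(g)V_\varphi=\pi_\varphi(h)V_\varphi$ as above. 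Since the vectors $\pi_\varphi(g)V_\varphi\xi$ span a dense subspace of $\mathcal{H}_\varphi$, this gives $S=0$, so $T=T'$.
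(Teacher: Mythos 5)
Your proof is correct, and its existence half takes a genuinely different route from the paper's. The paper never introduces a sesquilinear form: it first builds the minimal Stinespring construction $(\pi _{\psi },(\mathcal{H}_{\psi },\mathcal{J}_{\psi }),V_{\psi })$ of $\psi $, uses the matrix inequality coming from $\psi \leq \lambda \varphi $ to obtain a bounded operator $S:\mathcal{H}_{\varphi }\rightarrow \mathcal{H}_{\psi }$ with $S(\pi _{\varphi }(g)V_{\varphi }\xi )=\pi _{\psi }(g)V_{\psi }\xi $, checks the intertwining relations $S\pi _{\varphi }(g)=\pi _{\psi }(g)S$, $SV_{\varphi }=V_{\psi }$, $S\mathcal{J}_{\varphi }=\mathcal{J}_{\psi }S$, and sets $T=S^{\ast }S$; commutation of $T$ with $\pi _{\varphi }(G)$ and with $\mathcal{J}_{\varphi }$ then follows from those relations, and $\varphi _{T}=\psi $ is a short computation. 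You instead never leave $\mathcal{H}_{\varphi }$: the two-sided bound $0\leq B_{0}\leq \lambda \langle \cdot ,\cdot \rangle _{\varphi }$ (positivity of $\psi $ and of $\lambda \varphi -\psi $, both via condition (2) of Definition 2.1) lets you push the $\psi $-form down to a bounded positive form on $\mathcal{H}_{\varphi }$ and produce $T$ by Riesz representation, after which the three properties are verified directly on spanning vectors; these computations are sound, including the key rewriting $\pi _{\varphi }(g^{\prime })^{\ast }\pi _{\varphi }(g)V_{\varphi }=\pi _{\varphi }(\alpha (g^{\prime -1}\alpha (g)))V_{\varphi }$, and your uniqueness argument is essentially identical to the paper's. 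The two constructions give the same operator, since $\langle T\eta _{1},\eta _{2}\rangle =\langle S\eta _{1},S\eta _{2}\rangle $ on spanning vectors. What your route buys is self-containedness: no dilation of $\psi $ is needed, and $T\geq 0$ with $\Vert T\Vert \leq \lambda $ falls out at once. What the paper's route buys is the intertwiner $S$ itself, which is reused later in the paper: the proofs of Remark 3.4 and Proposition 3.5 invoke the operators $S_{1},S_{2}$ constructed in this proof (e.g. $\Delta _{\varphi }(\psi )=S_{1}^{\ast }S_{1}$ with $S_{1}$ invertible), so if your argument replaced the paper's, those later passages would have to recover such an intertwiner separately (say via $T^{1/2}$ and a polar-decomposition argument).
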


\begin{proof}
Let $\left( \pi _{\psi },\left( \mathcal{H}_{\psi },\mathcal{J}_{\psi
}\right) ,V_{\phi }\right) $ be the minimal Stinespring construction
associated to $\psi $. From%
\begin{eqnarray*}
&&\left\langle \tsum_{i=1}^{n}\pi _{\psi }\left( g_{i}\right) V_{\psi }\xi
_{i},\tsum_{i=1}^{n}\pi _{\psi }\left( g_{i}\right) V_{\psi }\xi
_{i}\right\rangle \\
&=&\tsum_{i,j=1}^{n}\left\langle V_{\psi }^{\ast }\pi _{\psi }\left(
g_{j}\right) ^{\ast }\pi _{\psi }\left( g_{i}\right) V_{\psi }\xi _{i},\xi
_{j}\right\rangle \\
&=&\tsum_{i,j=1}^{n}\left\langle V_{\psi }^{\ast }\pi _{\psi }\left( \alpha
\left( g_{j}^{-1}\right) g_{i}\right) V_{\psi }\xi _{i},\xi _{j}\right\rangle
\\
&=&\left\langle \left[ \psi \left( \alpha \left( g_{j}^{-1}\right)
g_{i}\right) \right] _{i,j=1}^{n}\left( \xi _{k}\right) _{k=1}^{n},\left(
\xi _{k}\right) _{k=1}^{n}\right\rangle \\
&\leq &\lambda \left\langle \left[ \varphi \left( \alpha \left(
g_{j}^{-1}\right) g_{i}\right) \right] _{i,j=1}^{n}\left( \xi _{k}\right)
_{k=1}^{n},\left( \xi _{k}\right) _{k=1}^{n}\right\rangle \\
&=&\lambda \left\langle \tsum_{i=1}^{n}\pi _{\varphi }\left( g_{i}\right)
V_{\varphi }\xi _{i},\tsum_{i=1}^{n}\pi _{\varphi }\left( g_{i}\right)
V_{\varphi }\xi _{i}\right\rangle
\end{eqnarray*}%
we deduce that there is a bounded linear operator $S:\mathcal{H}_{\varphi
}\rightarrow \mathcal{H}_{\psi }$ such that $S\left( \pi _{\varphi }\left(
g\right) V_{\varphi }\xi \right) =\pi _{\psi }\left( g\right) V_{\psi }\xi $%
. Clearly, $S\pi _{\varphi }\left( g\right) =\pi _{\psi }\left( g\right) S$
for all $g\in G$, and $SV_{\varphi }=V_{\psi }$. Moreover, $S\mathcal{J}%
_{\varphi }=\mathcal{J}_{\psi }S$, since 
\begin{eqnarray*}
S\mathcal{J}_{\varphi }\left( \pi _{\varphi }\left( g\right) V_{\varphi }\xi
\right) &=&S\left( \pi _{\varphi }\left( \alpha \left( g\right) \right)
V_{\varphi }\xi \right) =\pi _{\psi }\left( \alpha \left( g\right) \right)
V_{\psi }\xi \\
&=&\mathcal{J}_{\psi }\pi _{\psi }\left( g\right) V_{\psi }\xi =\mathcal{J}%
_{\psi }S\left( \pi _{\varphi }\left( g\right) V_{\varphi }\xi \right)
\end{eqnarray*}%
for all $g\in G$ and for all $\xi \in \mathcal{H}$.

Let $T=S^{\ast }S$. Then $T\mathcal{J}_{\varphi }=\mathcal{J}_{\varphi }T$
and $T\pi _{\varphi }\left( g\right) =\pi _{\varphi }\left( g\right) T$ for
all $g\in G$. Moreover, 
\begin{eqnarray*}
\varphi _{T}(g) &=&V_{\varphi }^{\ast }T\pi _{\varphi }\left( g\right)
V_{\varphi }=V_{\varphi }^{\ast }S^{\ast }S\pi _{\varphi }\left( g\right)
V_{\varphi }=V_{\varphi }^{\ast }S^{\ast }\pi _{\psi }\left( g\right)
V_{\varphi } \\
&=&V_{\varphi }^{\ast }S^{\ast }\mathcal{J}_{\psi }\pi _{\psi }\left(
g^{-1}\right) ^{\ast }\mathcal{J}_{\psi }V_{\psi }=V_{\varphi }^{\ast }%
\mathcal{J}_{\varphi }\pi _{\varphi }\left( g^{-1}\right) ^{\ast }S^{\ast
}V_{\psi } \\
&=&V_{\varphi }^{\ast }\pi _{\varphi }\left( g^{-1}\right) ^{\ast }S^{\ast
}V_{\psi }=V_{\psi }^{\ast }\pi _{\psi }\left( g^{-1}\right) ^{\ast }V_{\psi
}=V_{\psi }^{\ast }\pi _{\psi }\left( g\right) V_{\psi }=\psi \left( g\right)
\end{eqnarray*}%
for all $g\in G$.

Suppose that there is another $T_{1}\in \pi _{\varphi }\left( G\right)
^{\prime }\subseteq L\left( \mathcal{H}_{\varphi }\right) $, $\ T_{1}\geq 0\ 
$and $T_{1}\mathcal{J}_{\varphi }=\mathcal{J}_{\varphi }T_{1}$ such that $%
\psi =\varphi _{T_{1}}$. Then 
\begin{eqnarray*}
&&\left\langle \left( T-T_{1}\right) \left( \pi _{\varphi }\left( g\right)
V_{\varphi }\xi \right) ,\pi _{\varphi }\left( g^{\prime }\right) V_{\varphi
}\eta \right\rangle \\
&=&\left\langle V_{\varphi }^{\ast }\mathcal{J}_{\varphi }\pi _{\varphi
}\left( g^{\prime -1}\right) \mathcal{J}_{\varphi }\left( T-T_{1}\right)
\left( \pi _{\varphi }\left( g\right) V_{\varphi }\xi \right) ,\eta
\right\rangle \\
&=&\left\langle V_{\varphi }^{\ast }\left( T-T_{1}\right) \pi _{\varphi
}\left( g^{\prime -1}\right) \left( \pi _{\varphi }\left( \alpha \left(
g\right) \right) V_{\varphi }\xi \right) ,\eta \right\rangle \\
&=&\left\langle \varphi _{T}\left( g^{\prime -1}\alpha \left( g\right)
\right) \xi -\varphi _{T_{1}}\left( g^{\prime -1}\alpha \left( g\right)
\right) \xi ,\eta \right\rangle =0
\end{eqnarray*}%
for all $g,g^{\prime }G$, and for all $\xi ,\eta \in \mathcal{H}$, and since 
$\left[ \pi _{\varphi }\left( G\right) V_{\varphi }\mathcal{H}\right] =%
\mathcal{H}_{\varphi }$, we have $T=T_{1}$.
\end{proof}

From the proof of Proposition 3.1, we obtaine the following corollary.

\begin{corollary}
If $\varphi ,\psi $ are two $\alpha $-completely positive maps from $G$ to $%
L(\mathcal{H})$ and $\psi \leq \varphi $,$\ $then there is a unique positive
operator $T$ in $\pi _{\varphi }\left( G\right) ^{\prime }\subseteq L\left( 
\mathcal{H}_{\varphi }\right) $ such that $\ T\leq $id$_{\mathcal{H}%
_{\varphi }}$,$\ T\mathcal{J}_{\varphi }=\mathcal{J}_{\varphi }T$ and $\psi
=\varphi _{T}$.
\end{corollary}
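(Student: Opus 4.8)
The plan is to reduce everything to Proposition 3.2 and then extract the extra bound $T \leq \mathrm{id}_{\mathcal{H}_\varphi}$ by re-reading that proof with the sharp constant. First I would unwind the hypothesis: $\psi \leq \varphi$ means precisely that $\varphi - \psi$ is $\alpha$-completely positive, so applying condition $(2)$ of Definition 2.1 to $\varphi - \psi$ yields
\[
\left[ \psi\!\left(\alpha(g_i^{-1})g_j\right)\right]_{i,j=1}^n \;\leq\; \left[\varphi\!\left(\alpha(g_i^{-1})g_j\right)\right]_{i,j=1}^n
\]
for all $g_1,\dots,g_n \in G$. This is exactly the domination inequality exploited in the proof of Proposition 3.2, now with the constant $\lambda = 1$; in particular $\psi \leq_u \varphi$.

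Next I would invoke Proposition 3.2 verbatim: it furnishes a unique operator $T \in \pi_\varphi(G)' \subseteq L(\mathcal{H}_\varphi)$ with $T \geq 0$, $T\mathcal{J}_\varphi = \mathcal{J}_\varphi T$, and $\psi = \varphi_T$. The only new content of the corollary is the estimate $T \leq \mathrm{id}_{\mathcal{H}_\varphi}$, and for this I would return to the intertwining operator $S : \mathcal{H}_\varphi \to \mathcal{H}_\psi$ built in that proof, which satisfies $T = S^{\ast}S$ and $S\bigl(\pi_\varphi(g)V_\varphi \xi\bigr) = \pi_\psi(g)V_\psi \xi$.

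The chain of relations in the proof of Proposition 3.2 shows that
\[
\Bigl\| \tsum_{i=1}^n \pi_\psi(g_i) V_\psi \xi_i \Bigr\|^2 \;\leq\; \lambda \Bigl\| \tsum_{i=1}^n \pi_\varphi(g_i) V_\varphi \xi_i \Bigr\|^2 ,
\]
and since here $\lambda = 1$, the left-hand side equals $\|S\eta\|^2$ while the right-hand side equals $\|\eta\|^2$, where $\eta = \sum_i \pi_\varphi(g_i)V_\varphi \xi_i$ ranges over the dense subspace $\mathrm{span}\{\pi_\varphi(g)V_\varphi \xi : g\in G,\ \xi \in \mathcal{H}\}$. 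Hence $\|S\| \leq 1$, so $\|T\| = \|S^{\ast}S\| = \|S\|^2 \leq 1$. As $T$ is positive, its spectrum lies in $[0,\|T\|] \subseteq [0,1]$, giving $T \leq \|T\|\,\mathrm{id}_{\mathcal{H}_\varphi} \leq \mathrm{id}_{\mathcal{H}_\varphi}$.

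Uniqueness of $T$ is already guaranteed by Proposition 3.2 and needs nothing further. The argument is therefore essentially immediate from the stronger statement; the one point worth stating carefully is that $\alpha$-complete positivity of $\varphi - \psi$ is exactly the hypothesis of Proposition 3.2 with $\lambda = 1$, which is what converts the contractivity of $S$ into the operator bound $T \leq \mathrm{id}_{\mathcal{H}_\varphi}$. I do not foresee any genuine obstacle here.
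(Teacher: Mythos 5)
Your proof is correct and is essentially the paper's own argument: the paper derives this corollary directly from the proof of Proposition 3.2, observing that when $\psi \leq \varphi$ one can take $\lambda =1$ there, so the intertwining operator $S$ is a contraction and $T=S^{\ast }S\leq \mathrm{id}_{\mathcal{H}_{\varphi }}$, with uniqueness already supplied by that proposition. Your write-up merely makes explicit the details the paper leaves implicit.
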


Let $\varphi ,\psi $ be two $\alpha $-completely positive maps from $G$ to $%
L(\mathcal{H})$ such that $\psi \leq _{u}\varphi $. A positive operator $%
T\in $ $\pi _{\varphi }\left( G\right) ^{\prime }\subseteq L\left( \mathcal{H%
}_{\varphi }\right) $ with $T\mathcal{J}_{\varphi }=\mathcal{J}_{\varphi }T$
and such that $\psi =\varphi _{T}$, denoted by $\Delta _{\varphi }\left(
\psi \right) $, is called \textit{the} \textit{Radon-Nikodym derivative of }$%
\psi $\textit{\ with respect to }$\varphi .$\textit{\ }

\begin{remark}
If $\psi \leq _{u}\varphi $, then the minimal Stinespring construction
associated to $\psi $ can be recovered by the minimal Stinespring
construction associated to $\varphi .$

Let $P_{\ker \Delta _{\varphi }\left( \psi \right) }$ and $P_{\mathcal{H}%
_{\varphi }\ominus \ker \Delta _{\varphi }\left( \psi \right) }$ be the
orthogonal projections on $\ker \Delta _{\varphi }\left( \psi \right) ,$
respectively $\mathcal{H}_{\varphi }\ominus \ker \Delta _{\varphi }\left(
\psi \right) $. Since $\Delta _{\varphi }\left( \psi \right) \in \pi
_{\varphi }\left( G\right) ^{\prime }\subseteq L\left( \mathcal{H}_{\varphi
}\right) $ and $\Delta _{\varphi }\left( \psi \right) \mathcal{J}_{\varphi }=%
\mathcal{J}_{\varphi }\Delta _{\varphi }\left( \psi \right) ,$ $P_{\ker
\Delta _{\varphi }\left( \psi \right) },$ $P_{\mathcal{H}_{\varphi }\ominus
\ker \Delta _{\varphi }\left( \psi \right) }\in \pi _{\varphi }\left(
G\right) ^{\prime }\subseteq L\left( \mathcal{H}_{\varphi }\right) $,$%
P_{\ker \Delta _{\varphi }\left( \psi \right) }\mathcal{J}_{\varphi }=%
\mathcal{J}_{\varphi }P_{\ker \Delta _{\varphi }\left( \psi \right) }$ and $%
P_{\mathcal{H}_{\varphi }\ominus \ker \Delta _{\varphi }\left( \psi \right) }%
\mathcal{J}_{\varphi }=\mathcal{J}_{\varphi }P_{\mathcal{H}_{\varphi
}\ominus \ker \Delta _{\varphi }\left( \psi \right) }$. Then $\left( 
\mathcal{H}_{\varphi }\ominus \ker \Delta _{\varphi }\left( \psi \right) ,%
\mathcal{J}_{\varphi }|_{\mathcal{H}_{\varphi }\ominus \ker \Delta _{\varphi
}\left( \psi \right) }\right) $ is a Krein space and it is easy to check
that 
\begin{equation*}
\left( \pi _{\varphi }|_{\mathcal{H}_{\varphi }\ominus \ker \Delta _{\varphi
}\left( \psi \right) }\left( \mathcal{H}_{\varphi }\ominus \ker \Delta
_{\varphi }\left( \psi \right) ,\mathcal{J}_{\varphi }|_{\mathcal{H}%
_{\varphi }\ominus \ker \Delta _{\varphi }\left( \psi \right) }\right) ,P_{%
\mathcal{H}_{\varphi }\ominus \ker \Delta _{\varphi }\left( \psi \right)
}\Delta _{\varphi }\left( \psi \right) ^{\frac{1}{2}}V_{\varphi }\right)
\end{equation*}
is unitarily equivalent to the minimal Stinespring construction associated
to $\psi .$
\end{remark}

\begin{proposition}
Let $\varphi ,\psi \in \alpha -CP(G,\mathcal{H})$. If $\varphi \equiv
_{u}\psi $, then the Stinespring construction associated to $\varphi $ is
unitarily equivalent to the Stinespring construction associated to $\varphi
. $
\end{proposition}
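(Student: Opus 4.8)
The plan is to apply Proposition 3.2 in both directions supplied by $\varphi \equiv_u \psi$ and then extract a unitary from the resulting intertwiner via a polar decomposition. Since $\psi \leq_u \varphi$, the construction in the proof of Proposition 3.2 yields a bounded operator $S : \mathcal{H}_\varphi \to \mathcal{H}_\psi$ determined by $S(\pi_\varphi(g)V_\varphi \xi) = \pi_\psi(g)V_\psi \xi$, satisfying $S\pi_\varphi(g) = \pi_\psi(g)S$ for all $g \in G$, $SV_\varphi = V_\psi$, $S\mathcal{J}_\varphi = \mathcal{J}_\psi S$, and $S^\ast S = \Delta_\varphi(\psi)$. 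Applying the same proposition to $\varphi \leq_u \psi$ produces a bounded operator $S' : \mathcal{H}_\psi \to \mathcal{H}_\varphi$ with $S'(\pi_\psi(g)V_\psi \xi) = \pi_\varphi(g)V_\varphi \xi$, $S'\pi_\psi(g) = \pi_\varphi(g)S'$, $S'V_\psi = V_\varphi$, and $S'\mathcal{J}_\psi = \mathcal{J}_\varphi S'$.

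First I would show $S$ is invertible. On the spanning set one computes $S'S(\pi_\varphi(g)V_\varphi \xi) = S'(\pi_\psi(g)V_\psi \xi) = \pi_\varphi(g)V_\varphi \xi$, and since $[\pi_\varphi(G)V_\varphi \mathcal{H}] = \mathcal{H}_\varphi$ this forces $S'S = \mathrm{id}_{\mathcal{H}_\varphi}$; the symmetric computation gives $SS' = \mathrm{id}_{\mathcal{H}_\psi}$. Hence $S$ is a bounded bijection with bounded inverse $S'$, so $S$ is bounded below and $S^\ast S \geq \epsilon\, \mathrm{id}_{\mathcal{H}_\varphi}$ for some $\epsilon > 0$.

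Next, set $T = S^\ast S = \Delta_\varphi(\psi)$. By Proposition 3.2 we have $T \in \pi_\varphi(G)'$ and $T\mathcal{J}_\varphi = \mathcal{J}_\varphi T$, so by the continuous functional calculus the same commutation relations hold for $|S| = T^{1/2}$, which is invertible because $T \geq \epsilon\,\mathrm{id}_{\mathcal{H}_\varphi}$. Letting $S = U|S|$ be the polar decomposition, the invertibility of $S$ makes $U = S|S|^{-1}$ a unitary from $\mathcal{H}_\varphi$ onto $\mathcal{H}_\psi$. Using $|S| \in \pi_\varphi(G)'$ and $|S|\mathcal{J}_\varphi = \mathcal{J}_\varphi|S|$, I would then verify directly that $U\pi_\varphi(g) = S|S|^{-1}\pi_\varphi(g) = S\pi_\varphi(g)|S|^{-1} = \pi_\psi(g)S|S|^{-1} = \pi_\psi(g)U$ for all $g \in G$, and likewise $U\mathcal{J}_\varphi = \mathcal{J}_\psi U$, which exhibits the unitary equivalence of the two $\mathcal{J}$-unitary representations.

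The main obstacle is ensuring that $U$ is a genuine unitary and not merely a partial isometry; this rests on $|S|$ being invertible, which is precisely where the reverse domination $\varphi \leq_u \psi$ enters, through the two-sided inverse $S'$. I would also flag that one should \emph{not} expect $UV_\varphi = V_\psi$: that stronger relation together with the intertwining of $\pi_\varphi$ and $\pi_\psi$ would force $\varphi = \psi$. The correct reading of the statement is therefore the unitary equivalence of the pairs $(\pi_\varphi, \mathcal{J}_\varphi)$ and $(\pi_\psi, \mathcal{J}_\psi)$, with the cyclic operators related by $V_\psi = SV_\varphi = U\,\Delta_\varphi(\psi)^{1/2}V_\varphi$ rather than by $U$ alone.
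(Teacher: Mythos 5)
Your proof is correct and follows essentially the same route as the paper: apply Proposition 3.2 in both directions to obtain the intertwiners $S=S_{1}$ and $S^{\prime }=S_{2}$, check that $S_{2}S_{1}$ and $S_{1}S_{2}$ act as the identity on the dense subspaces spanned by $\pi _{\varphi }\left( G\right) V_{\varphi }\mathcal{H}$ and $\pi _{\psi }\left( G\right) V_{\psi }\mathcal{H}$, conclude that $S_{1}$ is invertible, and extract a unitary from the polar decomposition $S_{1}=U\Delta _{\varphi }\left( \psi \right) ^{1/2}$. Your verification of the intertwining relations, via the observation that $\left\vert S\right\vert =\Delta _{\varphi }\left( \psi \right) ^{1/2}$ and $\left\vert S\right\vert ^{-1}$ lie in $\pi _{\varphi }\left( G\right) ^{\prime }$ and commute with $\mathcal{J}_{\varphi }$, is precisely what the paper dismisses as ``easy to check.''

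The one place where you depart from the paper is your final caveat, and there you are right and the paper is wrong. The paper's proof asserts $UV_{\varphi }=V_{\psi }$; as you note, this identity together with $U\pi _{\varphi }\left( g\right) =\pi _{\psi }\left( g\right) U$ and unitarity of $U$ would give $\psi \left( g\right) =V_{\psi }^{\ast }\pi _{\psi }\left( g\right) V_{\psi }=V_{\varphi }^{\ast }U^{\ast }\pi _{\psi }\left( g\right) UV_{\varphi }=V_{\varphi }^{\ast }\pi _{\varphi }\left( g\right) V_{\varphi }=\varphi \left( g\right) $, forcing $\psi =\varphi $. A concrete counterexample is $\psi =2\varphi $ with $\varphi \neq 0$: then $\varphi \equiv _{u}\psi $, $\Delta _{\varphi }\left( \psi \right) =2\,\mathrm{id}_{\mathcal{H}_{\varphi }}$, and $UV_{\varphi }=2^{-1/2}V_{\psi }\neq V_{\psi }$. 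So the correct conclusion is the one you state: $U$ implements a unitary equivalence of the pairs $\left( \pi _{\varphi },\mathcal{J}_{\varphi }\right) $ and $\left( \pi _{\psi },\mathcal{J}_{\psi }\right) $, while the cyclic operators are related by $V_{\psi }=U\Delta _{\varphi }\left( \psi \right) ^{1/2}V_{\varphi }$, not by $U$ alone. Your proposal is thus not only a valid proof but a correction of the paper's own argument.
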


\begin{proof}
If $\varphi \equiv _{u}\psi $, then $\psi \leq _{u}\varphi $ and $\varphi
\leq _{u}\psi $, and by Proposition 3.2, there are two bounded linear
operators $S_{1}:$ $\mathcal{H}_{\varphi }\rightarrow \mathcal{H}_{\psi }$
such that $S_{1}\left( \pi _{\varphi }\left( g\right) V_{\varphi }\xi
\right) =\pi _{\psi }\left( g\right) V_{\psi }\xi $ and $S_{2}:\mathcal{H}%
_{\psi }\rightarrow \mathcal{H}_{\varphi }$ such that $S_{2}\left( \pi
_{\psi }\left( g\right) V_{\psi }\xi \right) =\pi _{\varphi }\left( g\right)
V_{\varphi }\xi .$ From $S_{2}S_{1}\left( \pi _{\varphi }\left( g\right)
V_{\varphi }\xi \right) =\pi _{\varphi }\left( g\right) V_{\varphi }\xi ,$ $%
S_{1}S_{2}\left( \pi _{\psi }\left( g\right) V_{\psi }\xi \right) =\pi
_{\psi }\left( g\right) V_{\psi }\xi $, and taking into account that $\left[
\pi _{\varphi }\left( G\right) V_{\varphi }\mathcal{H}\right] =\mathcal{H}%
_{\varphi }$ and $\left[ \pi _{\psi }\left( G\right) V_{\psi }\mathcal{H}%
\right] =\mathcal{H}_{\psi }$, we deduce that $S_{1}$ is invertible. Then $%
\Delta _{\varphi }\left( \psi \right) =S_{1}^{\ast }S_{1}$ is invertible,
and so there is a unitary operator $U:\mathcal{H}_{\varphi }\rightarrow 
\mathcal{H}_{\psi }$ such that $S_{1}=U\Delta _{\varphi }\left( \psi \right)
^{\frac{1}{2}}$. It is easy to check that $U\mathcal{J}_{\varphi }=\mathcal{J%
}_{\psi }U,UV_{\varphi }=V_{\psi }\ $and $U\pi _{\varphi }(g)=\pi _{\psi
}\left( g\right) U$ for all $g\in G.$
\end{proof}

\begin{theorem}
Let $\varphi $ be an $\alpha $-completely positive map from $G$ to $L(%
\mathcal{H})$. Then the map $\psi \mapsto \Delta _{\varphi }\left( \psi
\right) $ is an affine bijective map from $\{\psi \in \alpha -CP(G,\mathcal{H%
});\psi \leq _{u}\varphi \}\ $onto $\{T\in \pi _{\varphi }\left( G\right)
^{\prime }\subseteq L(\mathcal{H}_{\varphi });T\mathcal{J}_{\varphi }=%
\mathcal{J}_{\varphi }T,T\geq 0\}$ which preserves the pre-order relation.
\end{theorem}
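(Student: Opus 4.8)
The plan is to reduce the entire statement to one observation — that the assignment $T\mapsto \varphi _{T}$ is linear in $T$ — and then read off every required property from the existence-and-uniqueness content of Proposition 3.2 together with Proposition 3.1. First I would record this linearity: if $T_{1},T_{2}\in \pi _{\varphi }\left( G\right) ^{\prime }$ are positive operators commuting with $\mathcal{J}_{\varphi }$ and $s,t\geq 0$, then $sT_{1}+tT_{2}$ again lies in the target set, and directly from the defining formula $\varphi _{T}\left( g\right) =V_{\varphi }^{\ast }T\pi _{\varphi }\left( g\right) V_{\varphi }$ one gets $\varphi _{sT_{1}+tT_{2}}=s\varphi _{T_{1}}+t\varphi _{T_{2}}$. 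Combined with the uniqueness clause of Proposition 3.2 and the fact that $\psi =\varphi _{\Delta _{\varphi }\left( \psi \right) }$ by definition of the Radon-Nikodym derivative, this yields at once that $\Delta _{\varphi }$ is affine: writing $\psi _{i}=\varphi _{T_{i}}$ with $T_{i}=\Delta _{\varphi }\left( \psi _{i}\right) $, we have $s\psi _{1}+t\psi _{2}=\varphi _{sT_{1}+tT_{2}}$, so $\Delta _{\varphi }\left( s\psi _{1}+t\psi _{2}\right) =sT_{1}+tT_{2}$ by uniqueness.

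Next I would establish bijectivity. Injectivity is immediate, since $\Delta _{\varphi }\left( \psi _{1}\right) =\Delta _{\varphi }\left( \psi _{2}\right) =T$ forces $\psi _{1}=\varphi _{T}=\psi _{2}$. For surjectivity, take any $T$ in the target set. By Proposition 3.1 the map $\varphi _{T}$ is $\alpha $-completely positive, and the point is to check that it lies in the domain, i.e.\ $\varphi _{T}\leq _{u}\varphi $. Here I use $\varphi =\varphi _{\mathrm{id}_{\mathcal{H}_{\varphi }}}$ (from $\varphi \left( g\right) =V_{\varphi }^{\ast }\pi _{\varphi }\left( g\right) V_{\varphi }$) together with $0\leq T\leq \Vert T\Vert \,\mathrm{id}_{\mathcal{H}_{\varphi }}$: the operator $\Vert T\Vert \,\mathrm{id}_{\mathcal{H}_{\varphi }}-T$ is again positive, lies in $\pi _{\varphi }\left( G\right) ^{\prime }$, and commutes with $\mathcal{J}_{\varphi }$, so by Proposition 3.1 and linearity $\Vert T\Vert \varphi -\varphi _{T}=\varphi _{\Vert T\Vert \,\mathrm{id}_{\mathcal{H}_{\varphi }}-T}$ is $\alpha $-completely positive, giving $\varphi _{T}\leq \Vert T\Vert \varphi $. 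Thus $\varphi _{T}\leq _{u}\varphi $, and uniqueness in Proposition 3.2 gives $\Delta _{\varphi }\left( \varphi _{T}\right) =T$, so $T$ is in the range.

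Finally I would verify that $\Delta _{\varphi }$ preserves the pre-order in both directions. Suppose $\psi _{1},\psi _{2}\leq _{u}\varphi $. If $\psi _{1}\leq \psi _{2}$, then $\psi _{2}-\psi _{1}$ is $\alpha $-completely positive and, being dominated by $\psi _{2}$, also satisfies $\psi _{2}-\psi _{1}\leq _{u}\varphi $; hence it lies in the domain and additivity gives $\Delta _{\varphi }\left( \psi _{2}\right) -\Delta _{\varphi }\left( \psi _{1}\right) =\Delta _{\varphi }\left( \psi _{2}-\psi _{1}\right) \geq 0$, since every value of $\Delta _{\varphi }$ is positive. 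Conversely, if $\Delta _{\varphi }\left( \psi _{1}\right) \leq \Delta _{\varphi }\left( \psi _{2}\right) $, then $\Delta _{\varphi }\left( \psi _{2}\right) -\Delta _{\varphi }\left( \psi _{1}\right) $ is a positive operator in $\pi _{\varphi }\left( G\right) ^{\prime }$ commuting with $\mathcal{J}_{\varphi }$, so by Proposition 3.1 and linearity $\psi _{2}-\psi _{1}=\varphi _{\Delta _{\varphi }\left( \psi _{2}\right) -\Delta _{\varphi }\left( \psi _{1}\right) }$ is $\alpha $-completely positive, i.e.\ $\psi _{1}\leq \psi _{2}$.

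The individual steps are short, and the main thing demanding care is the bookkeeping that keeps the relevant sums and differences inside the domain cone $\{\psi :\psi \leq _{u}\varphi \}$ and the target cone, so that $\Delta _{\varphi }$ and its additivity may legitimately be applied. Once the cone structure of both sides is in place, every assertion of the theorem follows directly from the linearity of $T\mapsto \varphi _{T}$ and from Propositions 3.1 and 3.2.
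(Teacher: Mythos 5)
Your proof is correct, but it reaches the two substantive conclusions by a different route than the paper. For bijectivity the paper simply invokes Propositions 3.1 and 3.2, leaving implicit exactly the point you spell out: that for $T$ in the target set one has $\varphi _{T}\leq _{u}\varphi $. You obtain this from $\left\Vert T\right\Vert \varphi -\varphi _{T}=\varphi _{\left\Vert T\right\Vert \mathrm{id}_{\mathcal{H}_{\varphi }}-T}$ together with Proposition 3.1 applied to $\left\Vert T\right\Vert \mathrm{id}_{\mathcal{H}_{\varphi }}-T$; this is a genuine (if small) gap-filling, since Proposition 3.1 alone only gives that $\varphi _{T}$ is $\alpha $-completely positive, not that it lies in the domain $\{\psi :\psi \leq _{u}\varphi \}$. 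For preservation of the pre-order the paper argues analytically: assuming $\lambda \psi _{2}-\psi _{1}$ is $\alpha $-completely positive, it expands $\left\langle \left[ \left( \lambda \psi _{2}-\psi _{1}\right) \left( \alpha \left( g_{i}\right) ^{-1}g_{j}\right) \right] _{i,j}\left( \xi _{k}\right) ,\left( \xi _{k}\right) \right\rangle $ against vectors of the form $\sum_{i}\pi _{\varphi }\left( g_{i}\right) V_{\varphi }\xi _{i}$ and uses minimality, $\left[ \pi _{\varphi }\left( G\right) V_{\varphi }\mathcal{H}\right] =\mathcal{H}_{\varphi }$, to conclude $\lambda \Delta _{\varphi }\left( \psi _{2}\right) -\Delta _{\varphi }\left( \psi _{1}\right) \geq 0$. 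You instead argue order-theoretically: additivity of $\Delta _{\varphi }$ (from uniqueness in Proposition 3.2 plus linearity of $T\mapsto \varphi _{T}$) and positivity of its values give $\Delta _{\varphi }\left( \psi _{2}\right) -\Delta _{\varphi }\left( \psi _{1}\right) =\Delta _{\varphi }\left( \psi _{2}-\psi _{1}\right) \geq 0$, and Proposition 3.1 hands you the converse implication for free, so you actually show $\Delta _{\varphi }$ is an order isomorphism, slightly more than the paper states. The trade-off is that your argument depends on keeping $\psi _{2}-\psi _{1}$ and the various auxiliary combinations inside the domain cone, which requires that sums of $\alpha $-completely positive maps are again $\alpha $-completely positive; this is true but not entirely trivial (condition (3) of Definition 2.1 needs the row-operator inequality $\left( A+B\right) ^{\ast }\left( A+B\right) \leq 2\left( A^{\ast }A+B^{\ast }B\right) $), though the paper makes the same unproved assumption when it asserts $t\psi _{1}+(1-t)\psi _{2}\leq _{u}\varphi $, so you are no worse off. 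Finally, you state order preservation for $\leq $ rather than $\leq _{u}$; combining your version with positive homogeneity of $\Delta _{\varphi }$ (again a consequence of uniqueness) recovers the paper's $\leq _{u}$ formulation immediately.
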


\begin{proof}
By Propositions 3.1 and 3.2, the map $\psi \mapsto \Delta _{\varphi }\left(
\psi \right) $ is well defined and bijective, its inverse is given by $%
T\mapsto \varphi _{T}$. Let $t\in \lbrack 0,1],\psi _{1}\leq _{u}\varphi $
and $\psi _{2}\leq _{u}\varphi $. Then $t\psi _{1}+(1-t)\psi _{2}\leq
_{u}\varphi $ and so 
\begin{eqnarray*}
\varphi _{\Delta _{\varphi }\left( t\psi _{1}+(1-t)\psi _{2}\right) }\left(
g\right) &=&t\psi _{1}\left( g\right) +(1-t)\psi _{2}\left( g\right)
=t\varphi _{\Delta _{\varphi }\left( \psi _{1}\right) }\left( g\right)
+(1-t)\varphi _{\Delta _{\varphi }\left( \psi _{2}\right) }\left( g\right) \\
&=&V_{\varphi }^{\ast }\left( t\Delta _{\varphi }\left( \psi _{1}\right)
+(1-t)\Delta _{\varphi }\left( \psi _{2}\right) \right) \pi _{\varphi
}\left( g\right) V_{\varphi }
\end{eqnarray*}%
for all $g\in G$, whence we deduce that $\Delta _{\varphi }\left( t\psi
_{1}+(1-t)\psi _{2}\right) =t\Delta _{\varphi }\left( \psi _{1}\right)
+(1-t)\Delta _{\varphi }\left( \psi _{2}\right) .$ Therefore, the map $\psi
\mapsto \Delta _{\varphi }\left( \psi \right) $ is affine.

Let $\psi _{1}\leq _{u}\psi _{2}\leq _{u}\varphi $. Then there is $\lambda
\geq 0$ such that $\lambda \psi _{2}-\psi _{1}$ is $\alpha $-completely
positive. From 
\begin{eqnarray*}
0 &\leq &\left\langle \left[ \left( \lambda \psi _{2}-\psi _{1}\right)
\left( \alpha \left( g_{i}\right) ^{-1}g_{j}\right) \right]
_{i,j=1}^{n}\left( \xi _{i}\right) _{i=1}^{n},\left( \xi _{i}\right)
_{i=1}^{n}\right\rangle \\
&=&\lambda \tsum\limits_{i,j=1}^{n}\left\langle \varphi _{\Delta _{\varphi
}\left( \psi _{2}\right) }\left( \alpha \left( g_{i}\right)
^{-1}g_{j}\right) \xi _{j},\xi _{i}\right\rangle
-\tsum\limits_{i,j=1}^{n}\left\langle \varphi _{\Delta _{\varphi }\left(
\psi _{1}\right) }\left( \alpha \left( g_{i}\right) ^{-1}g_{j}\right) \xi
_{j},\xi _{i}\right\rangle \\
&=&\tsum\limits_{i,j=1}^{n}\left\langle V_{\varphi }^{\ast }\left( \lambda
\Delta _{\varphi }\left( \psi _{2}\right) -\Delta _{\varphi }\left( \psi
_{1}\right) \right) \pi _{\varphi }\left( \alpha \left( g_{i}\right)
^{-1}g_{j}\right) V_{\varphi }\xi _{j},\xi _{i}\right\rangle \\
&=&\tsum\limits_{i,j=1}^{n}\left\langle \left( \lambda \Delta _{\varphi
}\left( \psi _{2}\right) -\Delta _{\varphi }\left( \psi _{1}\right) \right)
\pi _{\varphi }\left( g_{j}\right) V_{\varphi }\xi _{j},\pi _{\varphi
}\left( \alpha \left( g_{i}\right) ^{-1}\right) ^{\ast }V_{\varphi }\xi
_{i}\right\rangle \\
&=&\tsum\limits_{i,j=1}^{n}\left\langle \left( \lambda \Delta _{\varphi
}\left( \psi _{2}\right) -\Delta _{\varphi }\left( \psi _{1}\right) \right)
\pi _{\varphi }\left( g_{j}\right) V_{\varphi }\xi _{j},\mathcal{J}_{\varphi
}\pi _{\varphi }\left( \alpha \left( g_{i}\right) \right) \mathcal{J}%
_{\varphi }V_{\varphi }\xi _{i}\right\rangle \\
&=&\tsum\limits_{i,j=1}^{n}\left\langle \left( \lambda \Delta _{\varphi
}\left( \psi _{2}\right) -\Delta _{\varphi }\left( \psi _{1}\right) \right)
\pi _{\varphi }\left( g_{j}\right) V_{\varphi }\xi _{j},\mathcal{J}_{\varphi
}\pi _{\varphi }\left( \alpha \left( g_{i}\right) \right) V_{\varphi }\xi
_{i}\right\rangle \\
&=&\left\langle \left( \lambda \Delta _{\varphi }\left( \psi _{2}\right)
-\Delta _{\varphi }\left( \psi _{1}\right) \right) \tsum\limits_{j=1}^{n}\pi
_{\varphi }\left( g_{j}\right) V_{\varphi }\xi
_{j},\tsum\limits_{i=1}^{n}\pi _{\varphi }\left( g_{i}\right) V_{\varphi
}\xi _{i}\right\rangle
\end{eqnarray*}%
for all $g_{1},...,g_{n}\in G$, for all $\xi _{1},...,\xi _{n}\in \mathcal{H}%
,$ and taking into account that $\left[ \pi _{\varphi }\left( G\right)
V_{\varphi }\mathcal{H}\right] =\mathcal{H}_{\varphi }$, we conclude that $%
\lambda \Delta _{\varphi }\left( \psi _{2}\right) -\Delta _{\varphi }\left(
\psi _{1}\right) \geq 0$. Therefore the map $\psi \mapsto \Delta _{\varphi
}\left( \psi \right) $ preserves the pre-order relation.
\end{proof}

\begin{corollary}
The map $\psi \mapsto \Delta _{\varphi }\left( \psi \right) $ is an affine
bijective map from $\{\psi \in \alpha -CP(G,\mathcal{H});\psi \leq \varphi
\}\ $onto $\{T\in \pi _{\varphi }\left( G\right) ^{\prime }\subseteq L(%
\mathcal{H}_{\varphi });T\mathcal{J}_{\varphi }=\mathcal{J}_{\varphi
}T,0\leq T\leq $id$_{\mathcal{H}_{\varphi }}\}$ which preserves the order
relation.
\end{corollary}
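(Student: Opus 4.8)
The plan is to derive this from Theorem 3.6 by restricting the affine bijection there to the order interval and then identifying the image. Since $\psi \leq \varphi$ means exactly that $\psi \leq _{u}\varphi$ with $\lambda =1$, the set $\{\psi \in \alpha -CP(G,\mathcal{H});\psi \leq \varphi \}$ is a subset of $\{\psi ;\psi \leq _{u}\varphi \}$, so by Theorem 3.6 the assignment $\psi \mapsto \Delta _{\varphi }(\psi )$ is already well defined, affine and injective on it, with inverse $T\mapsto \varphi _{T}$. It therefore remains only to check that its image is precisely the order interval $\{T\in \pi _{\varphi }(G)';T\mathcal{J}_{\varphi }=\mathcal{J}_{\varphi }T,0\leq T\leq \mathrm{id}_{\mathcal{H}_{\varphi }}\}$ and that the restriction preserves the order $\leq $.

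The key ingredient I would isolate is the following equivalence: for a self-adjoint $S\in \pi _{\varphi }(G)'$ with $S\mathcal{J}_{\varphi }=\mathcal{J}_{\varphi }S$, the map $\varphi _{S}$ is $\alpha $-completely positive if and only if $S\geq 0$. The ``if'' direction is exactly Proposition 3.1. For the ``only if'' direction I would reuse the identity established in the proof of Proposition 3.1 (whose computation uses only $S\in \pi _{\varphi }(G)'$, $S\mathcal{J}_{\varphi }=\mathcal{J}_{\varphi }S$ and condition (c)), namely
\[
\left\langle \left[\varphi _{S}\left(\alpha \left(g_{i}\right)^{-1}g_{j}\right)\right]_{i,j=1}^{n}\left(\xi _{k}\right)_{k=1}^{n},\left(\xi _{k}\right)_{k=1}^{n}\right\rangle =\left\langle S\sum_{j=1}^{n}\pi _{\varphi }\left(g_{j}\right)V_{\varphi }\xi _{j},\sum_{i=1}^{n}\pi _{\varphi }\left(g_{i}\right)V_{\varphi }\xi _{i}\right\rangle .
\]
If $\varphi _{S}$ is $\alpha $-completely positive, then condition $(2)$ of Definition 2.1 forces the left-hand side to be nonnegative, hence $\langle S\eta ,\eta \rangle \geq 0$ for every $\eta $ in the span of $\{\pi _{\varphi }(g)V_{\varphi }\xi ;g\in G,\xi \in \mathcal{H}\}$, which is dense in $\mathcal{H}_{\varphi }$ by the minimality condition $[\pi _{\varphi }(G)V_{\varphi }\mathcal{H}]=\mathcal{H}_{\varphi }$; therefore $S\geq 0$.

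With this equivalence the image is identified as follows. Observe that $\varphi =\varphi _{\mathrm{id}_{\mathcal{H}_{\varphi }}}$ and that $T\mapsto \varphi _{T}$ is linear. If $\psi \leq \varphi $ and $T=\Delta _{\varphi }(\psi )$, then $\varphi -\psi =\varphi _{\mathrm{id}_{\mathcal{H}_{\varphi }}-T}$ is $\alpha $-completely positive, so the equivalence gives $\mathrm{id}_{\mathcal{H}_{\varphi }}-T\geq 0$, i.e. $T\leq \mathrm{id}_{\mathcal{H}_{\varphi }}$ (this recovers Corollary 3.3). Conversely, if $0\leq T\leq \mathrm{id}_{\mathcal{H}_{\varphi }}$ lies in $\pi _{\varphi }(G)'$ and commutes with $\mathcal{J}_{\varphi }$, then both $T\geq 0$ and $\mathrm{id}_{\mathcal{H}_{\varphi }}-T\geq 0$ satisfy the hypotheses of Proposition 3.1, so $\varphi _{T}$ and $\varphi -\varphi _{T}=\varphi _{\mathrm{id}_{\mathcal{H}_{\varphi }}-T}$ are $\alpha $-completely positive; hence $\varphi _{T}\leq \varphi $ and $\Delta _{\varphi }(\varphi _{T})=T$, so $T$ is in the image.

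Finally, affineness is inherited verbatim from Theorem 3.6. For order preservation I would again combine linearity with the equivalence: for $\psi _{1},\psi _{2}\leq \varphi $ with $T_{i}=\Delta _{\varphi }(\psi _{i})$ one has $\psi _{2}-\psi _{1}=\varphi _{T_{2}-T_{1}}$, so $\psi _{1}\leq \psi _{2}$ (that is, $\psi _{2}-\psi _{1}$ is $\alpha $-completely positive) holds if and only if $T_{2}-T_{1}\geq 0$, i.e. $\Delta _{\varphi }(\psi _{1})\leq \Delta _{\varphi }(\psi _{2})$. I expect the main (and only genuinely new) obstacle to be the ``only if'' half of the key equivalence, extracting positivity of $S$ from $\alpha $-complete positivity of $\varphi _{S}$; but this is supplied cleanly by condition $(2)$ of Definition 2.1 together with the density coming from the minimal Stinespring construction, everything else being a restriction of Theorem 3.6.
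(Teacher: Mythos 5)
Your proof is correct and follows essentially the same route the paper intends for this corollary: restrict the bijection of Theorem 3.6 to the case $\lambda =1$, identify the image using Proposition 3.1 (applied to both $T$ and $\mathrm{id}_{\mathcal{H}_{\varphi }}-T$, via linearity of $T\mapsto \varphi _{T}$ and $\varphi =\varphi _{\mathrm{id}_{\mathcal{H}_{\varphi }}}$), and obtain positivity statements from the identity-plus-density argument, which is exactly the computation appearing in the proofs of Proposition 3.1 and Theorem 3.6. The only cosmetic difference is that the paper reaches $\Delta _{\varphi }\left( \psi \right) \leq \mathrm{id}_{\mathcal{H}_{\varphi }}$ through Corollary 3.3 (where $T=S^{\ast }S$ for a contraction $S$ coming from the proof of Proposition 3.2), whereas you rederive it from your key equivalence; the two are interchangeable.
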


Let $G$ be a discrete group. If $\pi $ is a bounded $\mathcal{J}$-unitary
representation of $G$ on $\left( \mathcal{H},\mathcal{J}\right) $, then the
map $\widetilde{\pi }:\mathcal{F}\left( G\right) \rightarrow L(\mathcal{H})$
given by%
\begin{equation*}
\widetilde{\pi }\left( \tsum\limits_{i=1}^{n}\lambda _{i}\delta
_{g_{i}}\right) =\tsum\limits_{i=1}^{n}\lambda _{i}\pi \left( g_{i}\right)
\end{equation*}%
extends to a bounded $\mathcal{J}$- representation of $C^{\ast }(G)$.
Moreover, the map $\pi \mapsto \widetilde{\pi }$ is a bijective
correspondence between the collection of bounded unitary representations on
Krein spaces and the collection of bounded representations of $C^{\ast }(G)$
on Krein spaces.

Let $G$ be a discrete group and $\varphi \in \alpha -CP(G,\mathcal{H})$.
Then $\alpha $ extends to a linear hermitian involution $\widetilde{\alpha }$
on $C^{\ast }(G),$ $\widetilde{\alpha }\left( f\right) =f\circ \alpha $ for
all $f\in \mathcal{F}\left( G\right) $. If $\varphi $ is bounded, then the
map $\Phi :\mathcal{F}\left( G\right) \rightarrow L(\mathcal{H})$ given by $%
\Phi \left( \tsum\limits_{k=1}^{n}\lambda _{k}\delta _{g_{k}}\right)
=\tsum\limits_{k=1}^{n}\lambda _{k}\varphi \left( g_{k}\right) $ extends to
a linear hermitian bounded $\widetilde{\alpha }$-completely positive $%
\widetilde{\varphi }:C^{\ast }(G)\rightarrow L(\mathcal{H})$ (see \cite[%
Theorem 2.5]{H}). We denote by $\alpha -bCP(G,\mathcal{H})$ the collection
of all bounded $\alpha $-completely positive maps from $G$ to $L(\mathcal{H}%
).$

\begin{remark}
Let $\varphi \in \alpha -bCP(G,\mathcal{H})$. If $\left( \pi _{\varphi
},\left( \mathcal{H}_{\varphi },\mathcal{J}_{\varphi }\right) ,V_{\varphi
}\right) $ is the minimal Stinespring construction associated to $\varphi $,
then it is easy to check that $\left( \widetilde{\pi _{\varphi }},\left( 
\mathcal{H}_{\varphi },\mathcal{J}_{\varphi }\right) ,V_{\varphi }\right) $
is unitarily equivalent to the minimal Stinespring construction associated
to $\widetilde{\varphi }\ $(\cite[Theorems 4.4 and 4.6 ]{HHJ})$.$
\end{remark}

\begin{theorem}
Let $G$ be a discrete group. Then the map $\varphi \mapsto \widetilde{%
\varphi }$ is an affine bijective map from $\alpha -bCP(G,\mathcal{H})$ to $%
\alpha -bCP(C^{\ast }(G),\mathcal{H})$ which preserves the order (pre-order)
relation. Moreover, if $\psi \leq _{u}\varphi $ then $\Delta _{\varphi
}\left( \psi \right) =\Delta _{\widetilde{\varphi }}\left( \widetilde{\psi }%
\right) .$
\end{theorem}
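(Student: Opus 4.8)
The plan is to build an explicit inverse for $\varphi \mapsto \widetilde{\varphi}$ and then reduce every assertion to the linearity and density of the extension together with the identification of Stinespring data in Remark 3.8. First I would record affineness: for $t\in[0,1]$ the maps $\widetilde{t\psi_{1}+(1-t)\psi_{2}}$ and $t\widetilde{\psi_{1}}+(1-t)\widetilde{\psi_{2}}$ are both linear on $\mathcal{F}(G)$ and take the common value $t\psi_{1}(g)+(1-t)\psi_{2}(g)$ on each $\delta_{g}$, hence agree on $\mathcal{F}(G)$; being bounded on $C^{\ast}(G)$ they coincide by density of $\mathcal{F}(G)$ in $C^{\ast}(G)$. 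The same density argument gives injectivity, since $\widetilde{\varphi}(\delta_{g})=\varphi(g)$ recovers $\varphi$ from $\widetilde{\varphi}$.

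For surjectivity I would produce the inverse directly: given $\Psi\in\alpha -bCP(C^{\ast}(G),\mathcal{H})$, set $(\Psi|_{G})(g)=\Psi(\delta_{g})$. The substantive point is that $\Psi|_{G}$ is $\alpha$-completely positive on $G$. Using $\delta_{g}^{\ast}=\delta_{g^{-1}}$, $\delta_{g}\delta_{g'}=\delta_{gg'}$ and $\widetilde{\alpha}(\delta_{g})=\delta_{\alpha(g)}$, each of the four conditions of Definition 2.1 is obtained by specialising the corresponding $\widetilde{\alpha}$-complete positivity condition for $\Psi$ to the elements $\delta_{g_{1}},\dots,\delta_{g_{n}}$; alternatively, one may restrict the minimal Stinespring representation of $\Psi$ to $G$ and invoke Proposition 2.7. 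Since $\widetilde{\Psi|_{G}}$ and $\Psi$ agree on every $\delta_{g}$ and both are continuous, $\widetilde{\Psi|_{G}}=\Psi$, so $\varphi\mapsto\widetilde{\varphi}$ is bijective with inverse $\Psi\mapsto\Psi|_{G}$.

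Order preservation then follows formally. By definition $\psi\leq\varphi$ means $\varphi-\psi\in\alpha -bCP(G,\mathcal{H})$, and by linearity $\widetilde{\varphi}-\widetilde{\psi}=\widetilde{\varphi-\psi}$; this element lies in $\alpha -bCP(C^{\ast}(G),\mathcal{H})$ exactly when $\varphi-\psi$ is $\alpha$-completely positive, using that both $\varphi\mapsto\widetilde{\varphi}$ and its inverse are well defined. Hence $\psi\leq\varphi\iff\widetilde{\psi}\leq\widetilde{\varphi}$, and since $\widetilde{\lambda\varphi}=\lambda\widetilde{\varphi}$ the uniform domination $\leq_{u}$ is preserved as well.

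The final assertion I would settle by uniqueness. By Remark 3.8 the triple $(\widetilde{\pi_{\varphi}},(\mathcal{H}_{\varphi},\mathcal{J}_{\varphi}),V_{\varphi})$ is the minimal Stinespring construction of $\widetilde{\varphi}$, and since $\widetilde{\pi_{\varphi}}(\delta_{g})=\pi_{\varphi}(g)$ generate $\widetilde{\pi_{\varphi}}(C^{\ast}(G))$ by density and continuity, the commutants coincide: $\widetilde{\pi_{\varphi}}(C^{\ast}(G))'=\pi_{\varphi}(G)'$. Thus the Radon--Nikodym operators for $\varphi$ and for $\widetilde{\varphi}$ are sought in the same set of positive operators commuting with $\mathcal{J}_{\varphi}$. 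Setting $T=\Delta_{\varphi}(\psi)$, for every $g\in G$ one has $(\widetilde{\varphi})_{T}(\delta_{g})=V_{\varphi}^{\ast}T\widetilde{\pi_{\varphi}}(\delta_{g})V_{\varphi}=V_{\varphi}^{\ast}T\pi_{\varphi}(g)V_{\varphi}=\varphi_{T}(g)=\psi(g)=\widetilde{\psi}(\delta_{g})$, so by linearity and continuity $(\widetilde{\varphi})_{T}=\widetilde{\psi}$. Therefore $T$ satisfies the defining properties of $\Delta_{\widetilde{\varphi}}(\widetilde{\psi})$, and the uniqueness in Proposition 3.2 applied to $\widetilde{\varphi},\widetilde{\psi}$ gives $\Delta_{\widetilde{\varphi}}(\widetilde{\psi})=T=\Delta_{\varphi}(\psi)$. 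The main obstacle is the bijectivity step, namely verifying that restricting a $\widetilde{\alpha}$-completely positive map to the generators $\delta_{g}$ yields an $\alpha$-completely positive map on $G$, together with the matching of commutants $\pi_{\varphi}(G)'=\widetilde{\pi_{\varphi}}(C^{\ast}(G))'$ that legitimises the concluding uniqueness argument.
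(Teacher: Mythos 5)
Your proof is correct, and most of its skeleton coincides with the paper's: affineness by linearity plus density, bijectivity via the restriction $\Psi\mapsto \Psi|_{G}$, $\Psi|_{G}(g)=\Psi(\delta _{g})$ (the paper merely asserts this restriction is $\alpha $-completely positive, so your extra detail is welcome), and the identification of dilations through Remark 3.8. Where you genuinely diverge is order preservation. The paper does not argue formally: for $\psi \leq \varphi $ it forms $\Delta _{\varphi }(\psi )$, writes $\widetilde{\psi }(f)=V_{\varphi }^{\ast }\Delta _{\varphi }(\psi )\widetilde{\pi _{\varphi }}(f)V_{\varphi }$, and invokes the Radon--Nikodym theory for $\alpha $-completely positive maps on $C^{\ast }$-algebras (the results of \cite{HJ}, \cite{HHJ} behind Remark 3.8) to get $\widetilde{\psi }\leq \widetilde{\varphi }$ and $\Delta _{\varphi }(\psi )=\Delta _{\widetilde{\varphi }}(\widetilde{\psi })$ in one stroke. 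You instead note $\widetilde{\varphi }-\widetilde{\psi }=\widetilde{\varphi -\psi }$ and that both the extension map and its inverse carry (bounded) $\alpha $-completely positive maps to $\widetilde{\alpha }$-completely positive maps, so $\psi \leq \varphi $ if and only if $\widetilde{\psi }\leq \widetilde{\varphi }$; this is more elementary, uses no dilation theory at this step, and yields the two-sided statement (the inverse also preserves order) for free, at the price of leaning entirely on the well-definedness of the restriction map, which you rightly single out as the substantive point. Your argument for the final identity is then essentially the paper's: exhibit $T=\Delta _{\varphi }(\psi )$ as a derivative of $\widetilde{\psi }$ with respect to $\widetilde{\varphi }$, using $\widetilde{\pi _{\varphi }}(C^{\ast }(G))^{\prime }=\pi _{\varphi }(G)^{\prime }$, and conclude by uniqueness. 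One small repair there: Proposition 3.2 is stated for maps on $G$, not on $C^{\ast }(G)$, so it cannot literally be ``applied to $\widetilde{\varphi },\widetilde{\psi }$''; either invoke the uniqueness of the Radon--Nikodym derivative in the $C^{\ast }$-algebra setting (\cite{HJ}, which is what the paper implicitly does), or observe that any admissible derivative $T^{\prime }$ of $\widetilde{\psi }$ with respect to $\widetilde{\varphi }$ lies, after the identification of Remark 3.8, in $\pi _{\varphi }(G)^{\prime }$, commutes with $\mathcal{J}_{\varphi }$, and satisfies $\psi =\varphi _{T^{\prime }}$ upon evaluation at the $\delta _{g}$, so the uniqueness of Proposition 3.2 applied to $\varphi ,\psi $ forces $T^{\prime }=\Delta _{\varphi }(\psi )$. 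With that adjustment the argument closes completely.
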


\begin{proof}
It is clear that the map $\varphi \mapsto \widetilde{\varphi }$ from $\alpha
-bCP(G,\mathcal{H})$ to $\alpha -bCP(C^{\ast }(G),\mathcal{H})$ is well
defined and injective. Let $\phi \in \alpha -bCP(C^{\ast }(G),\mathcal{H})$.
Then the map $\varphi :G\rightarrow L(\mathcal{H})$ given $\varphi \left(
g\right) =\phi \left( \delta _{g}\right) $ is a bounded $\alpha $-completely
positive. Moreover, $\widetilde{\varphi }=\phi $, and so the map $\varphi
\mapsto \widetilde{\varphi }$ is surjective.

Clearly, $\widetilde{\varphi _{1}+\varphi _{2}}=\widetilde{\varphi _{1}}+%
\widetilde{\varphi _{2}}$ and $\widetilde{\lambda \varphi }=\lambda 
\widetilde{\varphi }$ for all $\varphi _{1},\varphi _{2},\varphi \in \alpha
-bCP(G,\mathcal{H})$ and for all positive numbers $\lambda $. Let $\varphi
,\psi \in \alpha -bCP(G,\mathcal{H})$ with $\psi \leq \varphi $ and $\left(
\pi _{\varphi },\left( \mathcal{H}_{\varphi },\mathcal{J}_{\varphi }\right)
,V_{\varphi }\right) $ the minimal Stinespring construction associated to $%
\varphi $. Then $\widetilde{\psi }\left( f\right) =V_{\varphi }^{\ast
}\Delta _{\varphi }\left( \psi \right) \widetilde{\pi _{\varphi }}\left(
f\right) V_{\varphi }$ for all $f\in C^{\ast }(G)$ and since $\left( 
\widetilde{\pi _{\varphi }},\left( \mathcal{H}_{\varphi },\mathcal{J}%
_{\varphi }\right) ,V_{\varphi }\right) $ is unitarily equivalent to the
minimal Stinespring construction associated to $\widetilde{\varphi }$,$\ $
and since $\Delta _{\varphi }\left( \psi \right) \in \widetilde{\pi
_{\varphi }}\left( G\right) ^{\prime }\subseteq L(\mathcal{H}_{\varphi
}),\Delta _{\varphi }\left( \psi \right) \mathcal{J}_{\varphi }=\mathcal{J}%
_{\varphi }\Delta _{\varphi }\left( \psi \right) $ and $0\leq \Delta
_{\varphi }\left( \psi \right) \leq $id$_{\mathcal{H}_{\varphi }},\widetilde{%
\psi }\leq \widetilde{\varphi }$ and $\Delta _{\varphi }\left( \psi \right)
=\Delta _{\widetilde{\varphi }}\left( \widetilde{\psi }\right) .$
\end{proof}

\end{document}